\documentclass[a4paper]{amsart}
\usepackage{amsmath, amsfonts,amsthm,amssymb,amscd,array,enumitem,verbatim, graphicx,color,multirow,booktabs,tikz,adjustbox,setspace}
\usepackage{chngcntr}
\counterwithin{table}{section}
\def\classification#1{\def\@class{#1}}
\classification{\null}
\usepackage{framed, calc}
\usepackage[margin=2cm]{geometry}

\usepackage{url}

\setlength{\parskip}{\medskipamount}

\newtheorem{prop}{Proposition}[section]
\newtheorem{thm}[prop]{Theorem}
\newtheorem{question}[prop]{Question}
\newtheorem{cor}[prop]{Corollary}
\newtheorem{lem}[prop]{Lemma}
\theoremstyle{definition}
\numberwithin{equation}{section}

\newenvironment{claim}[1]{\par\noindent\textbf{Claim.}\space#1}{}
\newenvironment{claimproof}[1]{\par\noindent\textit{Proof of Claim.}\space#1}{}

\setlist[enumerate]{leftmargin=20pt,itemsep=0pt,topsep=0pt}
\setlist[enumerate,1]{label=\textup{(\roman*)}}

\DeclareMathOperator{\Symm}{Sym}

\DeclareMathOperator{\DP}{\textup{DP}}
\DeclareMathOperator{\PR}{\textup{P}}
\DeclareMathOperator{\PP}{\textup{PP}}
\DeclareMathOperator{\NORM}{\textup{N}}
\DeclareMathOperator{\CENT}{\textup{C}}

\newcommand\smallstrut{\rule{0pt}{7pt}}
\newcommand\smallerstrut{\rule{0pt}{6.5pt}}


\begin{document}
\title{Nilpotent covers of symmetric and alternating groups}

\author{Nick Gill}
\address{Department of Mathematics, University of South Wales, Treforest, CF37 1DL, United Kingdom}
\email{nick.gill@southwales.ac.uk}

\author{Ngwava Arphaxad Kimeu}
\address{P.O.BOX 116--90100, Machakos,Kenya; Moi University P.O.B0X 3900--30100, Eldoret, Kenya}
\email{kimeungwava70@gmail.com}

\author{Ian Short}
\address{School of Mathematics and Statistics, The Open University, Milton Keynes, MK7 6AA, United Kingdom}
\email{ian.short@open.ac.uk}

\begin{abstract}
We prove that the symmetric group $S_n$ has a unique minimal cover $\mathcal{M}$ by maximal nilpotent subgroups, and we obtain an explicit and easily computed formula for the order of $\mathcal{M}$. In addition, we prove that the order of $\mathcal{M}$ is equal to the order of a maximal non-nilpotent subset of $S_n$. This cover $\mathcal{M}$ has attractive properties; for instance, it is a normal cover, and the number of conjugacy classes of subgroups in the cover is equal to the number of partitions of $n$ into distinct positive integers. 

We show that these results contrast with those for the alternating group $A_n$. In particular, we prove that, for all but finitely many values of $n$, no minimal cover of $A_n$ by maximal nilpotent subgroups is a normal cover and the order of a minimal cover of $A_n$ by maximal nilpotent subgroups is strictly greater than the order of a maximal non-nilpotent subset of $A_n$. 
\end{abstract}

\keywords{alternating group; nilpotent cover; non-nilpotent subset; normal nilpotent cover; symmetric group}

\maketitle

\section{Introduction}

The principal objective of this paper is to determine the least number of nilpotent subgroups of the symmetric group on $n$ letters $S_n$ that are necessary to cover $S_n$. We establish that there is a unique minimal collection of maximal nilpotent subgroups that cover $S_n$, and the order of this collection can be computed easily from a list of the partitions of $n$ into distinct positive integers. Furthermore, we prove that the order of the collection is equal to the order of a maximal non-nilpotent subset of $S_n$. 

To explain our results in more detail, consider a finite group $G$. A \emph{nilpotent cover} of $G$ is a finite family $\mathcal{M}$ of nilpotent subgroups of $G$ for which 
\[
G = \bigcup_{H\in\mathcal{M}} H.
\]
A nilpotent cover $\mathcal{M}$ of $G$ is said to be \emph{minimal} if no other nilpotent cover of $G$ has fewer members. Let $\Sigma_N(G)$ denote the size of a minimal nilpotent cover of $G$, provided such a cover exists.

Of particular interest to us are nilpotent covers that are invariant under conjugation. A nilpotent cover $\mathcal{M}$ of $G$ is \emph{normal} if whenever $H\in \mathcal{M}$ and $g\in G$ we have $g^{-1}Hg\in\mathcal{M}$. We seek to ascertain whether a minimal nilpotent cover of $G$ can be found that is normal. 

Each normal nilpotent cover of $G$ can be partitioned into conjugacy classes of subgroups. Let $\Gamma_N(G)$ denote the least number of such conjugacy classes, among all the normal nilpotent covers of $G$.

There is a parallel notion to that of a nilpotent cover: a \emph{non-nilpotent subset} of $G$ is a subset $X$ of $G$ such that for any two distinct elements $x$ and $y$ of $X$, the subgroup $\langle x,y \rangle$ generated by $x$ and $y$ is not nilpotent. A non-nilpotent subset of $G$ is said to be \emph{maximal} if no other non-nilpotent subset of $G$ contains more elements. Let $\sigma_N(G)$ denote the  size of a maximal non-nilpotent subset of $G$. A straightforward consequence of the pigeon-hole principle is that
\(
\sigma_N(G)\leq \Sigma_N(G),
\)
provided $G$ has a nilpotent cover.

In this paper we calculate the quantities $\Gamma_N(S_n)$,  $\Sigma_N(S_n)$ and $\sigma_N(S_n)$ and prove that the latter two quantities coincide. We use the concept of a \emph{distinct partition} of a positive integer $n$, which is a set $T=\{t_1,t_2,\dots, t_k\}$ where $t_1,t_2,\dots, t_k$ are distinct positive integers and $n=t_1+t_2+\dots+t_k$. 

In Section~\ref{s: section2} we will prove that if the cycle type of an element $g$ of $S_n$ is a distinct partition of $n$, then $g$ lies within a unique maximal nilpotent subgroup of $S_n$ (Proposition~\ref{p: distinct partition}). We denote by $\mathcal{M}$ the collection of all maximal nilpotent subgroups that arise in this way.

\begin{thm}\label{thm1}
The cover $\mathcal{M}$ is the unique minimal cover of $S_n$ by maximal nilpotent subgroups.
\end{thm}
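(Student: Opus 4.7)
The plan is to establish three claims: that $\mathcal{M}$ covers $S_n$; that no proper subfamily of $\mathcal{M}$ covers $S_n$; and that any other minimal cover of $S_n$ by maximal nilpotent subgroups coincides with $\mathcal{M}$. The last two claims follow quickly from the covering property together with Proposition~\ref{p: distinct partition}, so the main work is in proving the cover.

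For minimality, each $H \in \mathcal{M}$ contains by definition a witness element $h_H$ of distinct partition cycle type, and Proposition~\ref{p: distinct partition} forces $h_H$ to lie in no maximal nilpotent subgroup other than $H$; deleting $H$ from $\mathcal{M}$ therefore leaves $h_H$ uncovered. For uniqueness, if $\mathcal{N}$ is any minimal cover of $S_n$ by maximal nilpotent subgroups, then for each $H \in \mathcal{M}$ the witness $h_H$ must be covered by some $H' \in \mathcal{N}$, and the uniqueness clause of Proposition~\ref{p: distinct partition} forces $H' = H$; hence $\mathcal{M} \subseteq \mathcal{N}$, and minimality of $\mathcal{N}$ combined with the covering property of $\mathcal{M}$ forces $\mathcal{N} = \mathcal{M}$.

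To prove the covering property, I would take any $g \in S_n$ and construct a nilpotent subgroup $N$ of $S_n$ containing both $g$ and an element $h$ of distinct partition cycle type. By Proposition~\ref{p: distinct partition}, $h$ lies in a unique maximal nilpotent subgroup $H$, and any nilpotent subgroup containing $h$ is itself contained in this $H$; hence $N \subseteq H$, so $g \in H \in \mathcal{M}$. The construction of $N$ depends on the cycle type $\lambda$ of $g$: when $\lambda$ is already a distinct partition, $N = \langle g\rangle$ works. Otherwise, the first attempt is to locate $h$ inside $\CENT_{S_n}(g) \cong \prod_m (C_m \wr S_{k_m})$, using the flexibility within each wreath factor to fuse the $k_m$ cycles of length $m$ of $g$ into longer cycles of chosen lengths, and coordinating these choices across $m$ so that the combined lengths form a distinct partition of $n$; when this is possible, $N = \langle g, h\rangle$ is abelian and hence nilpotent.

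The principal obstacle is that the abelian construction above fails in small cases --- for example $\CENT_{S_4}((1\,2))$ is a Klein four-group whose nontrivial elements have cycle types $(2,1,1)$ or $(2,2)$, neither a distinct partition of $4$ --- and one must then pass to a non-abelian nilpotent extension. The natural remedy is to decompose $g = g_{p_1}\cdots g_{p_r}$ into its commuting prime-power parts and, for each prime $p_i$, embed $g_{p_i}$ into a $p_i$-subgroup $N_{p_i}$ of $S_n$ chosen so that $N = \prod_i N_{p_i}$ is nilpotent and contains an element of distinct partition cycle type. The iterated wreath product structure of Sylow $p$-subgroups of $S_n$ supplies cycles of various lengths, providing the necessary flexibility; the combinatorial core of the proof is verifying that coordinated choices can always be made across the primes to realise some distinct partition of $n$.
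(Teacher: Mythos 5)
Your minimality and uniqueness arguments are correct and coincide with the paper's: both rest entirely on the uniqueness clause of Proposition~\ref{p: distinct partition} applied to witness elements of distinct-partition cycle type. The gap is in the covering property, which is the real content of the theorem (it is Proposition~\ref{p: cover} in the paper). You correctly reduce it to the claim that every $g\in S_n$ lies in a nilpotent subgroup that also contains an element whose cycle type is a distinct partition of $n$, but then defer exactly the hard step: you write that ``the combinatorial core of the proof is verifying that coordinated choices can always be made across the primes to realise some distinct partition of $n$,'' and never verify it. There is also a structural issue with the proposed remedy: for $N=\prod_i N_{p_i}$ to be a nilpotent \emph{group}, the $p_i$-subgroups $N_{p_i}$ must centralize one another, which forces them all to respect a common block structure on $\{1,2,\dots,n\}$; arranging this simultaneously with cycle fusion at every prime is a genuine constraint your sketch does not address.

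The paper's key idea, which is absent from your proposal, sidesteps multi-prime coordination entirely. Given the cycle type $R=\lambda_1^{a_1}\dotsb\lambda_k^{a_k}$ of $g$, one repeatedly amalgamates equal parts in pairs, replacing $\lambda^{a}$ by $(2\lambda)^{a/2}$ (with one leftover $\lambda$ if $a$ is odd), until a distinct partition $T$ is reached. Each original part $r$ then divides its target part $t$ of $T$ with quotient a power of $2$, so all the required fusion can be realised inside a group of the form $C_{r}\dot{\times}P_{2,2^d}$: the only prime at which extra flexibility is needed is $2$, and it is supplied by a Sylow $2$-subgroup of $S_{2^d}$, whose elements realise every cycle type that is a partition of $2^d$ into powers of $2$. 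One then checks that these groups embed (after absorbing any $2$-part of $r$) into a member of $N(T)$ of the shape given by Proposition~\ref{p: distinct partition}. Without this doubling scheme, or some equally explicit fusion argument, your proof of the cover is incomplete.
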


The cover $\mathcal{M}$ is by definition a normal cover, so we obtain the following corollary of Proposition~\ref{p: distinct partition} and Theorem~\ref{thm1}.

\begin{cor}\label{cor1}
The cover $\mathcal{M}$ is a normal nilpotent cover of $S_n$ and $\Gamma_N(S_n)$ is equal to the number of partitions of $n$ into distinct positive integers.
\end{cor}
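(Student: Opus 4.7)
The plan is to verify the two assertions using Proposition~\ref{p: distinct partition}, together with a careful analysis of how conjugation acts on $\mathcal{M}$. Write $p_d(n)$ for the number of partitions of $n$ into distinct positive integers.

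For normality, I would observe that if $H \in \mathcal{M}$ then $H = H_g$ for some $g$ of distinct-partition cycle type, and for any $x \in S_n$ the conjugate $x^{-1}Hx$ is maximal nilpotent and contains $x^{-1}gx$, which has the same cycle type as $g$; hence by the uniqueness in Proposition~\ref{p: distinct partition}, $x^{-1}Hx = H_{x^{-1}gx} \in \mathcal{M}$. The upper bound $\Gamma_N(S_n) \le p_d(n)$ then follows by counting: the $S_n$-equivariant map $g \mapsto H_g$ induces a surjection from the $p_d(n)$ conjugacy classes of distinct-partition-cycle-type elements onto the conjugacy classes of $\mathcal{M}$, and $\mathcal{M}$ is a normal nilpotent cover by the preceding argument together with Theorem~\ref{thm1}.

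For the matching lower bound, I would take any normal nilpotent cover $\mathcal{C}$ of $S_n$, pick a representative $g_T$ for each distinct partition $T$ of $n$, and choose some $K_T \in \mathcal{C}$ containing $g_T$. If $K_{T'} = xK_Tx^{-1}$ for some $x \in S_n$, then $K_T$ contains both $g_T$ and $x^{-1}g_{T'}x$; passing to a maximal nilpotent overgroup $\widetilde{K}$ of $K_T$ and applying Proposition~\ref{p: distinct partition} twice forces $\widetilde{K} = H_{g_T} = H_{x^{-1}g_{T'}x}$, so $\widetilde{K}$ realises two different distinct-partition cycle types. The hard step is to rule this out, that is, to establish the structural complement to Proposition~\ref{p: distinct partition} stating that no maximal nilpotent subgroup of $S_n$ contains elements of two different distinct-partition cycle types. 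I expect this to fall out of the explicit description of $H_g$ developed in Section~\ref{s: section2} as a direct product of Sylow subgroups dictated by the cycle lengths of $g$: from such a description one reads off that the only distinct-partition cycle type realised by an element of $H_g$ is the cycle type of $g$ itself. Once this is in hand, $T = T'$, the subgroups $K_T$ lie in $p_d(n)$ distinct $S_n$-conjugacy classes inside $\mathcal{C}$, and we conclude $\Gamma_N(S_n) \ge p_d(n)$.
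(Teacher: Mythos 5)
Your argument is correct and follows essentially the same route as the paper, which dispatches this corollary as an immediate consequence of Proposition~\ref{p: distinct partition} and Theorem~\ref{thm1}; you have simply made the conjugacy-class counting and the lower bound over arbitrary normal covers explicit. The one step you leave open closes exactly as you predict: by Proposition~\ref{p: distinct partition} the group $N_g$ is a direct product of subgroups $N_i$, each a $\dot\times$-product of Sylow subgroups and hence transitive on the orbit $O_i$ of $g$, so the orbits of $N_g$ coincide with those of $g$; therefore if $h\in N_g$ has distinct-partition cycle type, the uniqueness statement gives $N_h=N_g$, and comparing the orbit sizes of $N_h$ with those of $N_g$ shows that $h$ and $g$ have the same cycle type.
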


Moreover, using Proposition~\ref{p: distinct partition} we can see that $\mathcal{M}$ is the unique normal nilpotent cover of $S_n$ containing $\Gamma_N(S_n)$ conjugacy classes of maximal nilpotent subgroups of $S_n$.

The problem of calculating the number of distinct partitions of a positive integer $n$ is an old one. Values for small $n$ can be found at the OEIS \cite{oeis} along with a wealth of information about this problem. One well-known fact about distinct partitions (due to Euler) is that the number of distinct partitions of $n$ is equal to the number of partitions of $n$ into odd positive integers.

A further corollary of Theorem~\ref{thm1} gives an explicit formula for $\Sigma_N(S_n)$ and $\sigma_N(S_n)$.

\begin{cor}\label{cor2}
We have
\[
\Sigma_N(S_n)=
\sigma_N(S_n)=
\sum\limits_{T\in \DP(n)} 
\left(
\frac{n!}{\prod\limits_{t\in T}\prod\limits_{i=1}^\ell (p_i-1)\smallstrut^{a_i}p_i^{e_i}}\right)\!,
\]
where, in the final product, $t=p_1^{a_1}p_2^{a_2}\dotsb p_{\ell}^{a_\ell}$ is the prime factorisation of $t$ and $e_i=(p_i^{a_i}-1)/(p_i-1)$, for~$i=1,2,\dots,\ell$.
\end{cor}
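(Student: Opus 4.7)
The plan is to prove the two equalities separately. By Theorem~\ref{thm1}, $\Sigma_N(S_n) = |\mathcal{M}|$, and by the pigeon-hole observation noted above, $\sigma_N(S_n) \leq \Sigma_N(S_n)$; it therefore suffices to show that $\sigma_N(S_n) \geq |\mathcal{M}|$ and that $|\mathcal{M}|$ equals the stated sum.

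For the lower bound on $\sigma_N(S_n)$, from each $M \in \mathcal{M}$ select an element $g_M \in M$ of distinct-partition cycle type; such an element exists by the definition of $\mathcal{M}$. The set $X = \{g_M : M \in \mathcal{M}\}$ has size $|\mathcal{M}|$, because Proposition~\ref{p: distinct partition} guarantees that each $g_M$ lies in a unique maximal nilpotent subgroup---namely $M$---so distinct choices produce distinct elements. Moreover $X$ is non-nilpotent: if $g_M$ and $g_{M'}$ with $M \neq M'$ generated a nilpotent subgroup, it would be contained in some maximal nilpotent subgroup $N$, which by the same uniqueness would have to equal both $M$ and $M'$, a contradiction.

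To count $|\mathcal{M}|$, note that $\mathcal{M}$ is conjugation-invariant by construction, so it partitions into conjugacy classes of subgroups. Any two elements of a fixed cycle type $T \in \DP(n)$ are $S_n$-conjugate, whence Proposition~\ref{p: distinct partition} forces their maximal nilpotent overgroups also to be $S_n$-conjugate; these overgroups thus constitute a single class $\mathcal{M}_T$. The classes for different $T$ are distinct because the orbit partition of any member of $\mathcal{M}_T$ on $\{1,\ldots,n\}$ coincides with $T$, as follows from the structural description given in Section~\ref{s: section2}. Choosing a representative $M_T$ of each class,
\[
|\mathcal{M}| = \sum_{T \in \DP(n)} \frac{n!}{|N_{S_n}(M_T)|}.
\]

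The main obstacle is the explicit calculation of $|N_{S_n}(M_T)|$. From the description established in Section~\ref{s: section2}, $M_T$ decomposes as a direct product $\prod_{t \in T} M_t$, where each $M_t \subseteq \Symm(\Omega_t)$ is the unique maximal nilpotent overgroup in $\Symm(\Omega_t)$ of a $t$-cycle on the $t$-element support $\Omega_t$. Because the parts of $T$ are pairwise distinct, any element of $S_n$ normalising $M_T$ must preserve each $\Omega_t$ setwise, so $N_{S_n}(M_T) = \prod_{t \in T} N_{\Symm(\Omega_t)}(M_t)$. It remains to establish the single-cycle formula $|N_{S_t}(M_t)| = \prod_{i=1}^\ell (p_i-1)^{a_i} p_i^{e_i}$; via the Sylow decomposition of the nilpotent group $M_t$ this reduces to the prime-power case $t = p^a$, in which $M_t$ is a Sylow $p$-subgroup of $S_{p^a}$ realised as the iterated wreath product $C_p \wr \dotsb \wr C_p$, and its normaliser order $(p-1)^a p^e$ can be derived inductively on $a$.
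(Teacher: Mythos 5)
Your proposal is correct and follows essentially the same route as the paper: the same non-nilpotent subset $\{g_M : M\in\mathcal{M}\}$ yields $\sigma_N(S_n)\geq|\mathcal{M}|$, and $|\mathcal{M}|$ is computed as a sum of normalizer indices over the conjugacy classes indexed by $T\in\DP(n)$, with the normalizer factoring over the distinct orbits. The only step you treat more lightly is the reduction of $\NORM_{S_t}(M_t)$ to the product of the normalizers $\NORM_{S_{q_j}}(P_{p_j,q_j})$, which the paper justifies by observing that a normalizing element must preserve the unique system of imprimitivity of $M_t$ into $t/q_j$ blocks of size $q_j$ for each $j$.
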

	
The product $\prod\limits_{i=1}^\ell(p_i-1)\smallstrut^{a_i}p_i^{e_i}$ in Corollary~\ref{cor2} is considered to take the value 1 if $t=1$.	

Let $\DP(n)$ denote the set of all distinct partitions of $n$.  Table~\ref{table1} displays the first few values of $\DP(n)$ and~$\Sigma_N(S_n)$.

\begin{table}[ht]
\begin{tabular}{ p{1.5cm} p{9.5cm} p{2cm} }
$n$ & $\DP(n)$ & $\Sigma_N(S_n)$ \\
\hline\\[-6pt]
2 & $\{2\}$ & 1\\
3 & $\{1,2\},\{3\}$ & 4 \\
4 & $\{1,3\},\{4\}$ & 7 \\
5 & $\{1,4\},\{2,3\},\{5\}$ & 31 \\
6 & $\{1,2,3\}, \{1,5\},\{2,4\},\{6\}$ & 201 \\
7 & $\{1,2,4\}, \{1,6\},\{2,5\},\{3,4\},\{7\}$ & 1086 \\
8 & $\{1,2,5\},\{1,3,4\},\{1,7\},\{2,6\},\{3,5\},\{8\}$ & 5139 \\
9 & $\{1,2,6\},\{1,3,5\},\{2,3,4\},\{1,8\},\{2,7\},\{3,6\},\{4,5\},\{9\}$ & 37507 \\[8pt]
\end{tabular}
\caption{Values of $\DP(n)$ and $\Sigma_N(S_n)$, for $n=2,3,\dots,9$.}
\label{table1}
\end{table}

In Section~\ref{section nilpotent covers} we contrast these results for the symmetric group with properties of nilpotent covers of alternating groups. Then in Section~\ref{section other covers} we consider nilpotent covers and abelian covers of other almost simple groups.

\section{Nilpotent covers of symmetric groups}\label{s: section2}

Here we prove Theorem~\ref{thm1} and Corollary~\ref{cor2}. In the course of the proofs we use a variety of well known properties of  permutation groups and nilpotent groups. In particular, we use the fact that a finite nilpotent group is a direct product of its Sylow subgroups. A consequence of this observation is that elements of coprime order in a finite nilpotent  group commute. Another fact we use is that if $P$ is a Sylow $p$-subgroup of $S_n$, for some prime $p$, and $n=a_0+a_1p+\dotsb +a_kp^k$ is the base-$p$ expansion of $n$, then $P$ has $a_i$ orbits on $\{1,2,\dots,n\}$ of size $p^i$, for $i=0,1,\dots,k$ (and this accounts for all orbits of $P$). For example, if $p=3$ and $n=16$, then $P$ has orbits of sizes $1,3,3,9$.	

We make use of the following notation and terminology. Given a subset $X$ of $\{1,2,\dots,n\}$, we denote by $\Symm(X)$ the full group of symmetries of $X$ within $S_n$. That is, $\Symm(X)$ is the pointwise stabilizer of the complement of $X$. Also, we refer to the `orbits of a permutation $g$' as a shorthand for the orbits of the \emph{cyclic group} generated by~$g$. 

For an integer $k$ and a prime $p$, we define $|k|_p$ to be the largest power of $p$ that is a factor of $k$, and we define $|k|_{p'}=|k|/|k|_p$. 

\begin{lem}\label{l: intransitive dp}
Let $O_1,O_2,\dots,O_k$	be the orbits in $\{1,2,\dots,n\}$ of an element $g$ of $S_n$, and suppose that the orders $t_1,t_2,\dots,t_k$ of these orbits form a distinct partition of $n$. Any nilpotent subgroup $N$ of $S_n$ containing $g$ satisfies
\[
N \leq \Symm(O_1)\times \Symm(O_2) \times \dots \times \Symm(O_k).
\]
\end{lem}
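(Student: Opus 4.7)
My plan is to decompose $N$ into its Sylow subgroups and argue one prime at a time. Since $N$ is nilpotent, it factorises as $N = P_{p_1}\times\cdots\times P_{p_r}$, where each $P_{p_j}$ is the Sylow $p_j$-subgroup. Because $N$ is generated by the $P_{p_j}$, it suffices to show that each $P_p$ preserves every orbit $O_i$ setwise. Throughout, I write $g^{(p)}$ for the $p$-part of $g$ (the unique power of $g$ whose order equals $|g|_p$) and $g^{(p')}=g(g^{(p)})^{-1}$ for its $p'$-part.

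For each prime $p$, the central observation I would exploit is that $P_p$ centralises $g^{(p')}$: indeed $g^{(p')}\in\prod_{q\neq p}P_q$, and distinct direct factors of $N$ commute elementwise. Consequently $P_p$ permutes the cycles of $g^{(p')}$ (treating fixed points as cycles of length one) and preserves their lengths. Moreover $g^{(p)}\in P_p$, since every $p$-element of $N$ lies in the unique Sylow $p$-subgroup. On each orbit $O_i$, the restriction of $g^{(p')}$ consists of $|t_i|_p$ cycles of length $|t_i|_{p'}$, and $g^{(p)}$ permutes this set of $|t_i|_p$ cycles as a single cyclic permutation of length $|t_i|_p$. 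This is because the cyclic group generated by $g|_{O_i}$ acts regularly on $O_i$ and factorises as a direct product of its $p$-part and $p'$-part, so the induced action of the $p$-part on the orbits of the $p'$-part is regular.

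The decisive step would be combinatorial. For each length $l$, consider only the cycles of $g^{(p')}$ of length $l$; these come from the orbits $O_i$ with $|t_i|_{p'}=l$, each contributing exactly $|t_i|_p$ cycles. Since the $t_i$ are distinct and $t_i=|t_i|_p\cdot l$, the integers $|t_i|_p$ appearing here are pairwise distinct powers of $p$. Any $P_p$-orbit on this set of cycles is a union of $\langle g^{(p)}\rangle$-orbits, so its size is a sum of some of these distinct powers of $p$; but $P_p$ is a $p$-group, so its orbits have $p$-power size. Since a sum of two or more distinct positive powers of $p$ is never itself a power of $p$ (visible from the base-$p$ expansion), each $P_p$-orbit must consist of cycles from a single $O_i$, and hence $P_p$ preserves $O_i$.

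The main obstacle I anticipate is isolating the combinatorial fact that distinct powers of $p$ cannot sum to a power of $p$, and recognising that this is exactly where the distinct-partition hypothesis is essential. The rest is routine bookkeeping with the standard properties of nilpotent groups and with the cyclic action of $\langle g|_{O_i}\rangle$ on $O_i$.
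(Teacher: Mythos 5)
Your argument is correct and follows essentially the same route as the paper: reduce to a Sylow $p$-subgroup of $N$, use that it centralises the $p'$-part of $g$ to see that it permutes the cycles of that $p'$-part preserving lengths, observe that the $p$-part of $g$ acts transitively on those cycles inside each $O_i$, and finish with a base-$p$ digit argument. The only cosmetic difference is that the paper closes by quoting the orbit structure of Sylow $p$-subgroups of symmetric groups, whereas you derive the same conclusion directly from orbit--stabiliser together with the fact that two or more distinct powers of $p$ never sum to a power of $p$.
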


Note that the supposition of nilpotency is essential here: even replacing it with solvability would not work, as the example $g=(2,3)(4,5,6)\in S_3\wr S_2<S_6$ makes clear.

\begin{proof}
Let $M=\Symm(O_1)\times \Symm(O_2) \times \dots \times \Symm(O_k)$, a subgroup of $S_n$. Since $N$ is a direct product of its Sylow subgroups, it is enough to prove that any Sylow $p$-subgroup $P$ of $N$ is contained in $M$. Let $h\in P$. We will prove that $O_i^h=O_i$, for $i=1,2,\dots, k$, from which it follows that $h\in M$. 

Let $m$ denote the order of $g$ and let $d=|m|_{p'}$ and $f=|m|_{p}$ (so $m=df$). We define $g_1=g^{f}$, which has order $d$. The orbits of $g_1$ are a refinement of those of $g$. More specifically, for each index $i$, there are $|t_i|_p$ orbits of $g_1$ within $O_i$, each of size $|t_i|_{p'}$.

Since $g_1$ and $h$ have coprime orders and lie in a nilpotent group they must commute. Consequently, for any $x\in\{1,2,\dots,n\}$, the orbit of $x$ under $g_1$ has the same size as the orbit of $x^{h}$ under $g_1$. It follows that if $x\in O_i$ and $x^{h}\in O_j$, then $|t_i|_{p'}=|t_j|_{p'}$. 
	
Consider a complete set of orbits $O_i$ for which the values $|t_i|_{p'}$ are all equal; after relabelling we can assume that $O_1,O_2,\dots, O_\ell$ is such a set. The preceding argument shows that $h$ preserves the union $O_1\cup O_2\cup\dots \cup O_\ell$. For the remainder of the argument, we focus on the restriction of $g$ and $h$ to this union. To simplify notation, we assume that this union is in fact the full set $\{1,2,\dots,n\}$.

As before, we let $m$ denote the order of $g$ and let $d=|m|_{p'}$, $f=|m|_{p}$ and $g_1=g^{f}$, of order $d$. Since each of the orbits of $g_1$ has the same size, we see that $g_1$ is a product of $n/d$ disjoint $d$-cycles. Thus $g_1$ has $n/d$ orbits, each of which comprises the $d$ entries of some $d$-cycle of $g_1$.

Any element of $S_n$ that commutes with $g_1$ must permute these $n/d$ orbits. In this way we obtain a homomorphism $\theta$ from the centralizer $\CENT_{S_n}(g_1)$ of $g_1$ in $S_n$ to $S_{n/d}$. The Sylow $p$-subgroup $P$ is contained in $\CENT_{S_n}(g_1)$, so we can define $P'=\theta(P)$, a $p$-subgroup of $S_{n/d}$.

Now define $g_2=g^d$. This has order $f$, a power of $p$, so $g_2\in P$. Let us consider how the cyclic group $\langle g_2\rangle$ acts on the orbits of $g_1$. For any index $i\in\{1,2,\dots,\ell\}$, we know that $g_2$ fixes $O_i$. Choose $x,y\in O_i$; then $y=g^r(x)$ for some integer $r$. Since $d$ and $f$ are coprime we can find integers $a$ and $b$ such that $r=ad+bf$, in which case $y=g^{ad+bf}(x)$, so $g_2^a(x)=g_1^{-b}(y)$. It follows that $g_2^a(x)$ and $y$ are in the same orbit of $g_1$. Therefore any orbit of $g_1$ in $O_i$ can be mapped to any other under the action of $\langle g_2\rangle$. Consequently, $\theta(g_2)$ is a product of disjoint cycles of orders $t_1/d,t_2/d,\dots,t_\ell/d$. 

Each of the integers $t_1/d,t_2/d,\dots,t_\ell/d$ is a power of $p$, and they are distinct, since the integers $t_1,t_2,\dots,t_\ell$ are distinct. Thus $n/d=t_1/d + t_2/d +\dots + t_\ell/d$ is the $p$-ary decomposition of $n/d$. Consider next the action of the $p$-subgroup $P'$ of $S_{n/d}$. Using the observation about orbits of Sylow $p$-subgroups stated before the lemma we see that the orbits of $P'$ must be exactly those of $\theta(g_2)$. In particular, since $\theta(h)\in P'$ we see that, for any $i\in\{1,2,\dots,\ell\}$, the permutation $h$ maps any orbit of $g_1$ within $O_i$ to another orbit of $g_1$ in $O_i$. Hence $h$ fixes $O_i$, as required. 
\end{proof}

Let $p$ be a prime, $a$ a positive integer, and $q=p^a$. The next lemma uses the known result that all $q$-cycles in a Sylow $p$-subgroup $P$ of $S_q$ are conjugate in the  normalizer $\NORM_{S_q}(P)$ of $P$ in $S_q$.

\begin{lem}\label{l: Sylow 2}
Let $g$ be a $q$-cycle in $S_q$, where $q=p^a$, a prime power. There is a unique Sylow $p$-subgroup of $S_q$ that contains $g$.
\end{lem}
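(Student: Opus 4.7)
The plan is to count the set $X$ of pairs $(g, P)$, where $g$ is a $q$-cycle in $S_q$ and $P$ is a Sylow $p$-subgroup of $S_q$ containing $g$, in two different ways. Since every $q$-cycle has order $q=p^a$, Sylow's theorem ensures that each lies in at least one Sylow $p$-subgroup; let $m$ denote the number of Sylow $p$-subgroups containing a given $q$-cycle $g$. Because $S_q$ acts transitively by conjugation both on the set of $q$-cycles and on the set of Sylow $p$-subgroups, $m$ is independent of the choice of $g$, so the task reduces to showing $m=1$.

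The first count is immediate: $S_q$ contains $(q-1)!$ $q$-cycles, so $|X|=(q-1)!\cdot m$. For the second count, fix a Sylow $p$-subgroup $P$ and a $q$-cycle $g\in P$. By the stated fact, $\NORM_{S_q}(P)$ acts transitively by conjugation on the set of $q$-cycles contained in $P$. Now $\langle g\rangle$ is a transitive cyclic subgroup of $S_q$ of order $q$, so it is self-centralizing; in particular $\CENT_{S_q}(g)=\langle g\rangle\leq P\leq \NORM_{S_q}(P)$, and hence the stabilizer of $g$ under $\NORM_{S_q}(P)$-conjugation is exactly $\langle g\rangle$. By orbit--stabilizer, $P$ therefore contains precisely $|\NORM_{S_q}(P)|/q$ distinct $q$-cycles. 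Summing over the $n_p = |S_q|/|\NORM_{S_q}(P)|$ Sylow $p$-subgroups yields
\[
|X| \;=\; n_p\cdot\frac{|\NORM_{S_q}(P)|}{q} \;=\; \frac{|S_q|}{q} \;=\; (q-1)!.
\]
Equating the two counts gives $m=1$, which is the required uniqueness.

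The main difficulty is spotting how to deploy the stated conjugacy fact to enumerate the $q$-cycles lying in a single Sylow $p$-subgroup. The observation that unlocks this is $\CENT_{S_q}(g)=\langle g\rangle\leq P$: once that containment is noted, the orbit--stabilizer computation and the ensuing double count are completely routine, and no explicit description of $P$ as an iterated wreath product is needed.
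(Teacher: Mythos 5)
Your proof is correct, and it reaches the conclusion by a genuinely different route from the paper's. The paper argues directly: if $g$ lies in both $P$ and $h^{-1}Ph$, then $hgh^{-1}$ is a $q$-cycle in $P$, so by the normalizer-conjugacy fact $hgh^{-1}=k^{-1}gk$ for some $k\in\NORM_{S_q}(P)$; hence $kh\in\CENT_{S_q}(g)=\langle g\rangle\leq P$, forcing $h\in\NORM_{S_q}(P)$ and $h^{-1}Ph=P$. You instead run a double count of incidences between $q$-cycles and Sylow $p$-subgroups, using the same two ingredients (transitivity of $\NORM_{S_q}(P)$ on the $q$-cycles of $P$, and $\CENT_{S_q}(g)=\langle g\rangle$) to show that each Sylow subgroup contains exactly $|\NORM_{S_q}(P)|/q$ such cycles, whence the total incidence count is $(q-1)!$ and the multiplicity $m$ must be $1$. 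Both proofs are short and rest on identical inputs; the paper's version is slightly more self-contained in that it needs no counting of conjugacy classes or Sylow subgroups, while yours has the advantage of making the quantitative structure explicit (it shows the Sylow subgroups partition the set of $q$-cycles) and of being a template that generalises to other "unique overgroup" statements. One point worth making explicit in your write-up: the second count implicitly uses that every Sylow $p$-subgroup of $S_q$ contains at least one $q$-cycle (so that the orbit--stabilizer computation applies uniformly to all $n_p$ of them); this follows immediately from conjugacy of Sylow subgroups once one of them contains a $q$-cycle, but it deserves a sentence.
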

\begin{proof}
Let $P$ be a Sylow $p$-subgroup of $S_q$ that contains $g$, and suppose that $g$ also belongs to another Sylow $p$-subgroup $h^{-1}Ph$, for some $h\in S_q$. Then $hgh^{-1}$ is a $q$-cycle in $P$, so there exists $k\in \NORM_{S_q}(P)$ with $hgh^{-1}=k^{-1}gk$. Consequently, $(kh)g=g(kh)$, so $kh$ belongs to the centralizer of $g$ in $S_q$. Now, the centralizer of $g$ is the cyclic group generated by $g$, so  $h\in\NORM_{S_q}(P)$. Hence $h^{-1}Ph=P$. Thus $g$ is contained in a unique Sylow $p$-subgroup of $S_q$. 
\end{proof}

Next we introduce some concepts about permutation groups and Sylow subgroups.

Suppose that $G$ is a subgroup of $S_m$ and $H$ is a subgroup of $S_n$. Then $G\times H$ acts faithfully on $\{1,2,\dots,m\}\times\{1,2,\dots,n\}$ by the formula $(g,h)\colon(x,y)\longmapsto (x\smallerstrut^g,y^h)$. By choosing some identification of $\{1,2,\dots,m\}\times\{1,2,\dots,n\}$ with $\{1,2,\dots,mn\}$ we obtain a subgroup of $S_{mn}$, which we denote by $G\dot{\times}H$ (the freedom to choose an identification implies that this group is defined only up to conjugation in $S_{mn}$). 

In the same way we can take subgroups $G_1, G_2,\dots, G_k$ of the symmetric groups $S_{n_1},S_{n_2},\dots,S_{n_k}$, in order, and define the product $G_1\dot{\times} G_2\dot{\times} \dotsb \dot{\times} G_k$, a subgroup of $S_{n}$, where $n=n_1 n_2 \dotsb  n_k$. Observe that the operation $\dot{\times}$ is both associative and commutative. Observe also that if $G_1,G_2,\dots, G_k$ are transitive subgroups of $S_{n_1}, S_{n_2},\dots, S_{n_k}$, then $G_1\dot{\times} G_2\dot{\times} \dotsb \dot{\times} G_k$ is a transitive subgroup of $S_{n}$.

For a positive integer $t$, we write $t=p_1^{a_1}p_2^{a_2} \dotsb p_\ell^{a_\ell}$, where $p_1<p_2<\dots<p_\ell$ are primes and $a_1,a_2,\dots,a_\ell$ are positive integers. Let $q_i=p_i^{a_i}$, for $i=1,2,\dots, \ell$. We define $\PR(t)=[p_1,p_2,\dots, p_\ell]$ and $\PP(t)=[q_1,q_2,\dots, q_\ell]$, the lists of primes and the corresponding prime powers in the prime factorisation of $t$, respectively, each written in increasing order.

Let $P_{p_i,q_i}$ be a Sylow $p_i$-subgroup of $S_{q_i}$, for $i=1,2,\dots,\ell$. Since $t=q_1q_2\dotsb q_\ell$ we see that 
\[
P_{p_1, q_1}\dot{\times} P_{p_2,q_2}\dot{\times} \dotsb \dot{\times} P_{p_\ell, q_\ell}
\]
is a subgroup of $S_t$. Indeed, it is a transitive subgroup of $S_t$, because $P_{p_i,q_i}$ is a transitive subgroup of $S_{q_i}$.

The following lemma generalises Lemma~\ref{l: Sylow 2}.

\begin{lem}\label{l: transitive dp}
Let $g$ be a $t$-cycle in $S_t$. Let $\PR(t)=[p_1,p_2,\dots, p_\ell]$ and $\PP(t)=[q_1,q_2,\dots, q_\ell]$. Then $g$ lies in a unique maximal nilpotent subgroup $N$ of $S_t$ and
\[
 N=P_{p_1, q_1}\dot{\times} P_{p_2,q_2}\dot{\times} \dotsb \dot{\times} P_{p_\ell, q_\ell},
\]
for some Sylow $p_i$-subgroups $P_{p_i,q_i}$ of $S_{q_i}$, $i=1,2,\dots,\ell$.
\end{lem}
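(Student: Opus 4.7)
The plan is to start by using the Chinese Remainder Theorem to identify $\{1,2,\dots,t\}$, acted upon regularly by $\langle g\rangle$, with $\prod_{k=1}^\ell\mathbb{Z}/q_k$ in such a way that $g$ acts by addition of $(1,1,\dots,1)$. Under this identification, the $p_i$-part $g_i\in\langle g\rangle$ (of order $q_i$) acts by addition of the $i$-th standard basis vector; in particular, $g_i$ is a product of $t/q_i$ disjoint $q_i$-cycles in the $i$-th coordinate only. For the existence part, I would take $P_{p_i,q_i}$ to be the unique Sylow $p_i$-subgroup of $\Symm(\mathbb{Z}/q_i)$ containing the $q_i$-cycle $g_i$ (supplied by Lemma~\ref{l: Sylow 2}), view it as a subgroup of $S_t$ acting on the $i$-th coordinate only, and set $N=P_{p_1,q_1}\dot{\times}\dotsb\dot{\times}P_{p_\ell,q_\ell}$. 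Then $N$ is nilpotent (a direct product of $p$-groups) and contains $g=g_1\dotsb g_\ell$.

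For uniqueness, I would take an arbitrary nilpotent subgroup $M\leq S_t$ containing $g$ and show $M\leq N$. Writing $M$ as the direct product of its Sylow subgroups, any Sylow $p$-subgroup $M_p$ with $p\nmid t$ has order coprime to that of $g$, so it centralises $g$; this places $M_p\leq\CENT_{S_t}(g)=\langle g\rangle$ and forces $M_p=1$. Thus $M=M_{p_1}\times\dotsb\times M_{p_\ell}$ with $g_i\in M_{p_i}$. Because Sylow subgroups of the nilpotent group $M$ centralise one another, $M_{p_i}$ centralises $g_j$ for every $j\neq i$, so $M_{p_i}\leq\bigcap_{j\neq i}\CENT_{S_t}(g_j)$. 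A short calculation in coordinates identifies this intersection with $T_i\wr\Symm(\mathbb{Z}/q_i)$, where $T_i=\prod_{k\neq i}\mathbb{Z}/q_k$ is the ``translation group in the coordinates other than the $i$-th''; since $|T_i^{q_i}|$ is coprime to $p_i$, Schur--Zassenhaus tells us that every Sylow $p_i$-subgroup of this wreath product injects into a top copy of $\Symm(\mathbb{Z}/q_i)$.

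The main obstacle is to pin down that $M_{p_i}$ sits in the top $P_{p_i,q_i}$ acting on the $i$-th coordinate alone, rather than in some conjugate by an element of the base group. I would handle this by a direct conjugation computation in the wreath product: the formula $(v,1)(0,\pi)(v,1)^{-1}=(v-\pi(v),\pi)$ shows that a $T_i^{q_i}$-conjugate of the top $P_{p_i,q_i}$ contains $g_i=(0,c)$ (where $c$ is the $q_i$-cycle) precisely when $v$ is fixed by $c$, and since $c$ cyclically permutes the $q_i$ coordinates of $v$, this forces $v$ to be constant; but a constant vector gives a trivial conjugation, so the top $P_{p_i,q_i}$ is itself the unique Sylow $p_i$-subgroup of $T_i\wr\Symm(\mathbb{Z}/q_i)$ containing $g_i$. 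Combined with Lemma~\ref{l: Sylow 2} applied inside $\Symm(\mathbb{Z}/q_i)$ to fix $P_{p_i,q_i}$ uniquely, this yields $M_{p_i}\leq P_{p_i,q_i}$, and hence $M\leq N$. Therefore $N$ is itself nilpotent, maximal among nilpotent subgroups of $S_t$, and the unique such containing $g$.
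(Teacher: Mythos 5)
Your proof is correct, and while it shares the paper's overall skeleton---the Chinese Remainder Theorem identification of $\{1,2,\dots,t\}$ with $\mathbb{Z}/q_1\mathbb{Z}\times\dots\times\mathbb{Z}/q_\ell\mathbb{Z}$, the reduction to showing that the Sylow $p_i$-part of an arbitrary nilpotent overgroup of $g$ moves only the $i$th coordinate, and the final appeal to Lemma~\ref{l: Sylow 2}---the crucial middle step is carried out by a different mechanism. The paper takes an element $h$ of order coprime to $p_i$, uses commutation with $g^{t/q_i}$ to see that $h$ shifts the $i$th coordinate by a constant on each fibre, and then corrects $h$ by a power of $g^{q_i}$ so that the induced permutation of $\mathbb{Z}/q_i\mathbb{Z}$ is a translation whose order is simultaneously a power of $p_i$ and coprime to $p_i$, hence trivial; this is a bare-hands coordinate computation. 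You instead observe that $M_{p_i}$ centralises the translation group $T_i$ in the remaining coordinates, identify $\CENT_{S_t}(T_i)$ with the wreath product $T_i\wr\Symm(\mathbb{Z}/q_i\mathbb{Z})$, and use the coprimality of the base group to $p_i$ together with the explicit conjugation formula to force $M_{p_i}$ into the top copy, where Lemma~\ref{l: Sylow 2} finishes the job. Both arguments exploit the same coprimality, but yours is more structural and makes explicit the roles of $\CENT_{S_t}(g)=\langle g\rangle$ (which you also use, neatly, to kill the Sylow subgroups for primes not dividing $t$) and of the wreath-product normal form. Two minor remarks: the injection of a $p_i$-subgroup into the top quotient needs only that the base is a normal $p_i'$-subgroup, and the conjugacy of Sylow $p_i$-subgroups of the wreath product already shows each is a base-conjugate of a top Sylow subgroup, so Schur--Zassenhaus can be avoided; and the ``short calculation'' identifying $\CENT_{S_t}(T_i)$ with $T_i\wr\Symm(\mathbb{Z}/q_i\mathbb{Z})$ (the centraliser of a semiregular abelian group with $q_i$ regular orbits) deserves to be written out, but it is standard and poses no obstacle.
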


\begin{proof}
By conjugating, we can assume that $g=(1,2,\dots,t)$. Let $N$ be a nilpotent subgroup of $S_t$ that contains $g$. Then $N$ is the direct product of its Sylow $p_i$-subgroups $P_i$, for $i=1,2,\dots,\ell$.
	
We identify the permutation set $\{1,2,\dots,t\}$ with $\mathbb{Z}/t\mathbb{Z}$ by sending $x$ to $[x]_t$, the congruence class of integers congruent to $x$ modulo $t$. We identity $\mathbb{Z}/t\mathbb{Z}$ with $\mathbb{Z}/q_1\mathbb{Z}\times\mathbb{Z}/q_2\mathbb{Z}\times\dots \times \mathbb{Z}/q_\ell\mathbb{Z}$ by sending $[x]_t$ to $([x]_{q_1},[x]_{q_2},\dots,[x]_{q_\ell})$. On $\mathbb{Z}/t\mathbb{Z}$ the action of $g$ is given by $[x]_t\longmapsto[x+1]_t$. 

Let $N_1=P_2P_3\dotsb P_\ell$ and choose $h\in N_1$. Consider any element $x=([x_1]_{q_1},[x_2]_{q_2},\dots,[x_\ell]_{q_\ell})$ of $\mathbb{Z}/q_1\mathbb{Z}\times\mathbb{Z}/q_2\mathbb{Z}\times\dots \times \mathbb{Z}/q_\ell\mathbb{Z}$ and define
\[
([y_1]_{q_1},[y_2]_{q_2},\dots,[y_\ell]_{q_\ell})=h([x_1]_{q_1},[x_2]_{q_2},\dots,[x_\ell]_{q_\ell}).
\]
We will prove that $[y_1]_{q_1}=[x_1]_{q_1}$.

Define $g_1=g^{t/q_1}$, which has order $q_1$, so it commutes with $h$. Hence $hg_1^k(x)=g_1^kh(x)$, for any integer $k$. Evaluating each side of this equation we obtain
\[
h([x_1+kt/q_1]_{q_1},[x_2]_{q_2},\dots,[x_\ell]_{q_\ell})=([y_1+kt/q_1]_{q_1},[y_2]_{q_2},\dots,[y_\ell]_{q_\ell}).
\]
Now define $g_2=g^{q_1}$, which has order $t/q_1$, so $g_2\in N_1$. We can choose an integer $m$ such that $[y_i+mq_1]_{q_i}=[x_i]_{q_i}$, for $i=2,3,\dots,\ell$. It follows that 
\[
g_2^mh([x_1+kt/q_1]_{q_1},[x_2]_{q_2},\dots,[x_\ell]_{q_\ell})=([y_1+kt/q_1]_{q_1},[x_2]_{q_2},\dots,[x_\ell]_{q_\ell}),
\]
for any integer $k$. We obtain an action of $g_2^mh$ on $\mathbb{Z}/q_1\mathbb{Z}$. However, the order of $g_2^mh$ is coprime to $q_1$, so $g_2^mh$ is the identity permutation. Hence $[x_1]_{q_1}=[y_1]_{q_1}$, as required.

A similar argument holds with the $i$th component instead of the first component. Thus, if $h\in P_j$, then $h$ fixes each component of $\mathbb{Z}/q_1\mathbb{Z}\times\mathbb{Z}/q_2\mathbb{Z}\times\dots \times \mathbb{Z}/q_\ell\mathbb{Z}$ other than the $j$th component. In this way we can identify $P_j$ with a subgroup of $\Symm(\mathbb{Z}/q_j\mathbb{Z})$, and $N=P_1\dot{\times}P_2\dot{\times}\dotsb\dot{\times} P_\ell$. Now, $g^{n/q_j}\in P_j$ and it is a $q_j$-cycle in $\Symm(\mathbb{Z}/q_j\mathbb{Z})$. By Lemma~\ref{l: Sylow 2} there is a unique Sylow $p_j$-subgroup $Q_j$ of $S_{q_j}$ that contains $g^{n/q_j}$. Taking the product 
\(
Q_1\dot{\times}Q_2\dot{\times}\dotsb\dot{\times}Q_\ell
\)
of all such groups we obtain a maximal nilpotent group containing $g$ (and $N$), uniquely specified by the subgroups $Q_j$, as required.
\end{proof}

The following proposition is an immediate consequence of Lemmas~\ref{l: intransitive dp} and~\ref{l: transitive dp}.

\begin{prop}\label{p: distinct partition}
Let $O_1,O_2,\dots,O_k$	be the orbits in $\{1,2,\dots,n\}$ of an element $g$ of $S_n$, and suppose that the orders $t_1,t_2,\dots,t_k$ of these orbits form a distinct partition of $n$. Then there is a unique maximal nilpotent subgroup $N$ of $S_n$ that contains $g$. Furthermore, there are subgroups $N_i$ of $\Symm(O_i)$, for $i=1,2,\dots,k$, with
\[
N  = N_{1}\times N_{2} \times \dots \times N_{k},
\]
where, for $i=1,2,\dots,k$,  we write $\PR(t_i)=[p_{j_1},p_{j_2},\dots, p_{j_{\ell_i}}]$ and $\PP(t_i)=[q_{j_1},q_{j_2},\dots, q_{j_{\ell_i}}]$, and we have
\[
 N_{i}=P_{p_{j_1}, q_{j_1}}\dot{\times} P_{p_{j_2},q_{j_2}}\dot{\times} \dotsb \dot{\times} P_{p_{j_{\ell_i}}, q_{j_{\ell_i}}},
\]
for some Sylow $p_{j_s}$-subgroups $P_{p_{j_s},q_{j_s}}$ of $S_{q_{j_s}}$, $s=1,2,\dots,\ell_i$.
\end{prop}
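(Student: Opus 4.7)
The plan is to combine Lemma~\ref{l: intransitive dp} and Lemma~\ref{l: transitive dp} through a straightforward projection argument. Given $g$ with orbit decomposition $O_1,O_2,\dots,O_k$ whose sizes form a distinct partition, Lemma~\ref{l: intransitive dp} immediately forces every nilpotent subgroup $N$ of $S_n$ that contains $g$ to lie inside the direct product
\[
M = \Symm(O_1)\times \Symm(O_2)\times \dots \times \Symm(O_k).
\]
This reduces the problem of locating maximal nilpotent supergroups of $g$ to a problem inside $M$.

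Next, for each $i$, I would let $g_i$ denote the restriction of $g$ to $O_i$, which is a $t_i$-cycle in $\Symm(O_i)\cong S_{t_i}$, and let $\pi_i\colon M\to\Symm(O_i)$ be the projection onto the $i$-th factor, so $\pi_i(g)=g_i$. If $N\leq M$ is any nilpotent subgroup containing $g$, then $\pi_i(N)$ is a nilpotent subgroup of $\Symm(O_i)$ containing $g_i$. By Lemma~\ref{l: transitive dp}, applied inside $\Symm(O_i)$, there is a unique maximal nilpotent subgroup $N_i$ of $\Symm(O_i)$ containing $g_i$, and it has precisely the product form
\[
N_i = P_{p_{j_1}, q_{j_1}}\dot{\times} P_{p_{j_2},q_{j_2}}\dot{\times} \dotsb \dot{\times} P_{p_{j_{\ell_i}}, q_{j_{\ell_i}}}
\]
described in the proposition. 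Consequently $\pi_i(N)\leq N_i$ for every $i$, and therefore
\[
N \leq N_1\times N_2\times \dots \times N_k.
\]

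To finish, I would observe that a direct product of nilpotent groups is nilpotent, so $N_1\times N_2\times\dots\times N_k$ is itself a nilpotent subgroup of $S_n$ containing $g$. Since it contains every nilpotent subgroup of $S_n$ that contains $g$, it is the unique maximal such subgroup, and its decomposition is precisely the one asserted. I do not foresee any genuine obstacle; the only point that requires a moment's care is that a nilpotent $N\leq M$ need not equal the direct product of its projections $\pi_i(N)$, but only be contained in it, and this weaker fact is exactly what the argument uses.
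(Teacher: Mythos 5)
Your argument is correct and is precisely the intended combination of Lemma~\ref{l: intransitive dp} and Lemma~\ref{l: transitive dp}: the paper states the proposition as an immediate consequence of those two lemmas and gives no further detail, and your projection argument (reduce to $\Symm(O_1)\times\dots\times\Symm(O_k)$, apply Lemma~\ref{l: transitive dp} to each factor, and note that the product $N_1\times\dots\times N_k$ is itself nilpotent and contains every nilpotent subgroup containing $g$) is exactly the right way to fill it in. Your closing remark that one only needs $N\leq\pi_1(N)\times\dots\times\pi_k(N)$ rather than equality is the correct point of care.
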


Let $g$ be an element of a subgroup $G$ of $S_m$ and let $h$ be an element of a subgroup $H$ of $S_n$. Suppose that $g$ can be expressed as a product of disjoint cycles of lengths $\lambda_1,\lambda_2,\dots,\lambda_k$ (not necessarily distinct) and $h$ can be expressed as a product of disjoint cycles of lengths $\mu_1,\mu_2,\dots,\mu_\ell$.  Then the permutation $(g,h)\colon(x,y)\longmapsto (x\smallerstrut^g,y^h)$ acting on $\{1,2,\dots,m\}\times\{1,2,\dots,n\}$ can be expressed as a product of disjoint cycles of lengths $\lambda_i\mu_j$, for $i=1,2,\dots,k$, $j=1,2,\dots,\ell$. In this manner we can determine the cycle types of all members of $G\dot{\times}H$ from those of $G$ and $H$. This observation is used in the proof of Proposition~\ref{p: cover}, to follow.

Now, let $T$ be a distinct partition of the positive integer $n$; that is, $T\in \DP(n)$. Given an element $g$ of $S_n$ with cycle type $T$ we let $N_g$ denote the unique maximal nilpotent subgroup of $S_n$ containing $g$. Then we define
\[
 N(T) = \{ h^{-1}N_gh : h\in S_n\}.
\]
Clearly, this definition does not depend on the choice of permutation $g$ of cycle type $T$.

We are now able to state our final result. 

\begin{prop}\label{p: cover}
We have \(S_n\,=\bigcup\limits_{T\in \DP(n)}N(T).\)
\end{prop}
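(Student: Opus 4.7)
The plan is to show that every $\sigma\in S_n$ lies in $N_g$ for some $g\in S_n$ whose cycle type is a distinct partition of $n$. By Proposition~\ref{p: distinct partition}, such an $N_g$ is a direct product $N_1\times\cdots\times N_k$, where the orbits $O_1,\ldots,O_k$ of $g$ have pairwise distinct lengths $t_i$ and each $N_i\leq\Symm(O_i)$ is a transitive nilpotent subgroup of the $\dot{\times}$-form of Lemma~\ref{l: transitive dp}. So I will produce a $\sigma$-invariant partition $\{U_1,\ldots,U_k\}$ of $\{1,\ldots,n\}$ with pairwise distinct part sizes $t_i=|U_i|$, and for each $i$ a transitive nilpotent $N_i\leq\Symm(U_i)$ of the required form containing $\sigma|_{U_i}$. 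The product $N_1\times\cdots\times N_k$ will then coincide with $N_g$ for some $g$ of cycle type $(t_1,\ldots,t_k)\in\DP(n)$.

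First I build a candidate partition from the orbits of $\sigma$. For each $\ell\geq 1$, let $V_\ell$ be the union of the orbits of $\sigma$ of length exactly $\ell$, so $|V_\ell|=a_\ell\ell$ where $a_\ell$ is the number of such orbits; the $V_\ell$'s form a $\sigma$-invariant partition of $\{1,\ldots,n\}$. In the favourable case that the sizes $a_\ell\ell$ are pairwise distinct, I take $\{U_i\}=\{V_\ell:a_\ell>0\}$ and verify the embedding as follows: identify $V_\ell$ with $A_\ell\times L_\ell$ (with $|A_\ell|=a_\ell$, $|L_\ell|=\ell$) so that $\sigma|_{V_\ell}$ becomes $(x,y)\mapsto(x,c(y))$ for a fixed $\ell$-cycle $c\in\Symm(L_\ell)$; then $\sigma|_{V_\ell}=(e_{A_\ell},c)$ lies in $N_{A_\ell}\dot{\times} N_{L_\ell}$, where $N_{L_\ell}$ is the transitive nilpotent of $\Symm(L_\ell)$ determined by $c$ via Lemma~\ref{l: transitive dp} and $N_{A_\ell}$ is any transitive nilpotent of $\Symm(A_\ell)$. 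By the associativity and commutativity of $\dot{\times}$ and by Lemma~\ref{l: transitive dp}, this re-assembles into the required $\dot{\times}$-form (directly when $\gcd(a_\ell,\ell)=1$, and otherwise after being contained in such a subgroup).

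The hardest case arises when the sizes $a_\ell\ell$ are not all distinct; then some orbits of $\sigma$ must be regrouped across the bundles $V_\ell$ to achieve distinct part sizes while keeping each $\sigma|_{U_i}$ inside a transitive nilpotent of $\Symm(U_i)$. The main obstacle is that the cycle types of elements of transitive nilpotents of $\Symm(U_i)$ are severely constrained: by the multiplicativity of cycle types under $\dot{\times}$ noted in the paragraph preceding the proposition, the cycle lengths of such elements are products of prime powers drawn from the prime-power decomposition of $|U_i|$, and the corresponding multiplicities arise as cross-products of multiplicities in the Sylow factors. I would handle this obstacle by assigning individual $\sigma$-orbits to parts in such a way that each resulting cycle type is realisable, exploiting the freedom in Lemma~\ref{l: transitive dp} in the choice of Sylow subgroups and the freedom to move orbits between parts; showing that such a regrouping can always be found, for every $\sigma$, is the heart of the argument.
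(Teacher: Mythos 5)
Your overall strategy is the same as the paper's -- reduce to showing that an arbitrary $\sigma$ of cycle type $R$ lies in some group of $N(T)$ with $T\in\DP(n)$, by exhibiting a $\sigma$-invariant partition with distinct part sizes on each part of which $\sigma$ restricts into a transitive nilpotent group of the canonical $\dot{\times}$-of-Sylows form of Lemma~\ref{l: transitive dp}. But there is a genuine gap: the case you defer (``the sizes $a_\ell\ell$ are not all distinct'') is not an edge case, it is the whole problem, and you explicitly leave its resolution unproved (``showing that such a regrouping can always be found \dots is the heart of the argument''). Even simple permutations such as one of cycle type $1^2 2^1$ in $S_4$ (where $|V_1|=|V_2|=2$) fall outside your favourable case, so without the regrouping step the argument covers only a thin slice of $S_n$. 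Moreover, your favourable case itself is only sketched: to conclude that $N_{A_\ell}\dot{\times}N_{L_\ell}$ sits inside a group of the exact form demanded by Proposition~\ref{p: distinct partition} when $\gcd(a_\ell,\ell)>1$, you need to merge the two Sylow-$p$ factors into a single transitive Sylow $p$-subgroup of $S_{|a_\ell\ell|_p}$; this is true but you do not argue it.

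The paper fills the gap you identify with a concrete algorithm: repeatedly amalgamate equal parts of $R$ in pairs, replacing $\lambda^{a}$ by $(2\lambda)^{a/2}$ or $(2\lambda)^{(a-1)/2}\lambda$, until a distinct partition $T$ is reached. The payoff of this particular regrouping is arithmetic: every part $r$ of $R$ assigned to a part $t$ of $T$ divides $t$ with quotient a power of $2$, so if $R_t=[r_1,\dots,r_\ell]$ (ascending) and $t/r_1=2^d$, then the ratios $s_i=r_i/r_1$ are powers of $2$ summing to $2^d$, whence an element of cycle type $[s_1,\dots,s_\ell]$ in $S_{2^d}$ has $2$-power order and lies in a Sylow $2$-subgroup $P_{2,2^d}$. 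Consequently $C_{r_1}\dot{\times}P_{2,2^d}$ contains an element of cycle type $R_t$, and after absorbing the $2$-part of $r_1$ into $P_{2,q_\ell 2^d}$ one obtains a group of the canonical form containing that element. Your proposal contains no mechanism guaranteeing that the cycle types you need are realisable in the constrained groups you must land in; the power-of-two trick (or some substitute for it) is exactly the missing idea.
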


\begin{proof}
Observe that if $q$ is a power of a prime $p$, then the group $P_{p,q}$ contains a $q$-cycle. This implies, in the notation of Proposition~\ref{p: distinct partition}, that the group $N_{i}$ contains
\[
 C_{q_{j_1}} \dot{\times} C_{q_{j_2}} \dot{\times} \cdots \dot{\times} C_{q_{j_k}},
\]
where each group $C_{q_{j_s}}$ is a cyclic permutation group generated by a $q_{j_s}$-cycle. Hence $N_i$ contains a $t_i$-cycle. It follows, in turn, that the group $N$ contains the  product  of $k$ disjoint cycles of lengths $t_1,t_2,\dots, t_k$. 

We define
\[
\Omega\,=\bigcup\limits_{T\in \DP(n)}N(T).
\]
 Choose an  an arbitrary element $h$ of $S_n$. We wish to show that $h\in \Omega$, and since $\Omega$ is a union of conjugacy classes of $S_n$,  it is enough to show that $\Omega$ contains an element conjugate to $h$. Let $R=\lambda_1^{a_1}\lambda_2^{a_2}\dotsb \lambda_k^{a_k}$ be the cycle type of $h$, using the usual partition notation: $h$ is  a product of $a_i$ disjoint $\lambda_i$-cycles, for $i=1,2,\dots, k$, and $a_1\lambda_1+a_2\lambda_2+\dots +a_k\lambda_k=n$.

Now we describe a process for amalgamating parts of the partition $R$ ``in pairs'': for $i=1,2,\dots, k$, whenever $a_i>1$ we replace $\lambda_i^{a_i}$ by
\[
 \begin{cases}
  (2\lambda_i)^{a_i/2}, & \textrm{ if $a_i$ is even},\\
  (2\lambda_i)^{(a_i-1)/2}\lambda_i, & \textrm{ if $a_i$ is odd}.  
 \end{cases}
\]
We repeat this process until we have a distinct partition $T$. For example, if $R=2^23^24^36^18^116^1$, a partition of $52$, then
\[
 2^23^24^36^18^116^1 \, \longrightarrow \, 4^16^18^14^16^18^116^1=4^26^28^216^1 \, \longrightarrow \, 8^112^116^116^1=8^112^116^2\, \longrightarrow \, 8^112^132^1.
\]
We claim that the groups in $N(T)$ contain elements of cycle type $R$. To see this, observe that, given a part $t$ of $T$, we can work backwards through the algorithm from $T$ to $R$ to obtain a list $R_t$ of parts of $T$ whose sum is $t$ (not necessarily unique). Each element of $R_t$ is a factor of $t$ with quotient a power of 2. By working through the parts of $T$ one by one, we can choose the lists $R_t$, for $t\in T$, to be a partition of $R$. For instance, using the example above with $T=8^112^132^1$, we can choose 
\[
R_8 = [2,2,4], \quad  R_{12}= [3,3,6] \quad R_{32}=[4, 4,8,16].
\]

Let $t$ be a part of $T$ and let $R_t=[r_1,r_2,\dots, r_\ell]$ be the corresponding parts of $R$, listed in ascending order. Then $t/r_1=2^d$, for some positive integer $d$. Let $s_i=r_i/r_1$, for $i=1,2,\dots,\ell$, so $s_1+s_2+\dots+s_\ell=2^d$. Each integer $s_i$ is a power of $2$. Hence any element of the symmetric group $S_{2^d}$ with cycle type $[s_1,s_2,\dots,s_\ell]$ has order a power of 2. It follows that such an element is contained in a Sylow $2$-subgroup $P_{2, 2^d}$. Consequently, the group   $C_{r_1}\dot{\times} P_{2, 2^d}$ contains an element of cycle type $R_t$, where $C_{r_1}$ is generated by an $r_1$-cycle. 

Let $r_1=p_1^{a_1}p_2^{a_2}\cdots p_k^{a_k}$ be the prime decomposition of $r_1$ and, as usual, set $q_i=p_i^{a_i}$, for $i=1,2,\dots, k$. We observed at the start of the proof that $C_{r_1}$ is a subgroup of 
$P_{p_1, q_1}\dot{\times} P_{p_2, q_2}\dot{\times}\cdots\dot{\times}P_{p_\ell, q_\ell}$, so the group 
\[
 P_{p_1, q_1}\dot{\times} P_{p_2, q_2}\dot{\times}\cdots\dot{\times}P_{p_\ell, q_\ell}\dot\times P_{2, 2^d}
\]
contains an element of cycle type $R_t$. If $p_1,p_2,\dots, p_\ell$ are all odd, then we call this group $N_1$. If this is not the case, then we relabel so that $p_\ell=2$ and we make this group bigger, by defining
\[
 N_1=P_{p_1, q_1}\dot{\times} P_{p_2, q_2}\dot{\times}\cdots\dot{\times}P_{p_{\ell-1}, q_{\ell-1}}\dot\times P_{2, q_\ell2^d}.
\]
Observe that, again, $N_1$ contains an element of cycle type $R_t$. 

If $T$ has $e$ parts, then we repeat this process, and obtain groups $N_1,N_2,\dots, N_e$. Notice that $N_1\times N_2\times \cdots \times N_e$ embeds in $S_n$ (naturally and intransitively), and observe that it lies in $N(T)$ and contains an element of cycle type $R$, as required.
\end{proof}

We can now prove the results stated in the introduction.

\begin{proof}[Proof of Theorem~\ref{thm1}]
Proposition~\ref{p: distinct partition} tells us that if the cycle type of an element $g$ of $S_n$ is a distinct partition of $n$, then $g$ lies within a unique maximal nilpotent subgroup of $S_n$. Recall from the introduction that we denote the collection of all such maximal nilpotent subgroups by $\mathcal{M}$. Thus $\mathcal{M}=\{X:X\in N(T)\text{ and } T\in\DP(n)\}$. By Proposition~\ref{p: cover}, $\mathcal{M}$ is a cover of $S_n$. Furthermore, the uniqueness property of Proposition~\ref{p: distinct partition} implies that $\mathcal{M}$ is the unique minimal nilpotent cover of $S_n$ by maximal nilpotent subgroups. This concludes the proof of Theorem~\ref{thm1}.
\end{proof}

Corollary~\ref{cor1} follows immediately from Proposition~\ref{p: distinct partition} and Theorem~\ref{thm1}.

\begin{proof}[Proof of Corollary~\ref{cor2}]
First we establish that $\Sigma_N(S_n)= \sigma_N(S_n)$, observing that we already know that $\sigma_N(S_n)\leq \Sigma_N(S_n)$ (which, as noted in the introduction, is true more generally). For the reverse inequality, given $N\in \mathcal{M}$ we can find an element $g_N$ (with cycle type a distinct partition of $n$) for which $N$ is the unique maximal nilpotent subgroup of $S_n$ containing $g_N$. The set $\{g_N : N\in\mathcal{M}\}$ is a non-nilpotent subset of $S_n$ of size  $|\mathcal{M}|$, so $\Sigma_N(S_n)\leq \sigma_N(S_n)$, as required.

Next, observe that 
\[
|\mathcal{M}| = \sum_{T\in\DP(n)}|N(T)|,
\]
where $|N(T)|$ is equal to the index of the normalizer $\NORM_{S_n}(N_g)$ of $N_g$ in $S_n$, for any permutation $g$ of cycle type $T$. Let $O_1,O_2,\dots,O_k$ be the orbits in $\{1,2,\dots,n\}$ of $g$. Using Proposition~\ref{p: distinct partition} we can write $N_g$ as a direct product $N_{1}\times N_{2} \times \dots \times N_{k}$, where $N_i$ is a subgroup of $\Symm(O_i)$, for $i=1,2,\dots,k$. Now, if $h\in \NORM_{S_n}(N_g)$, then $h$ must permute the orbits $O_i$, and since they are of distinct orders we see that $h$ fixes each orbit. Consequently, the normalizer $\NORM_{S_n}(N_g)$ is the direct product of the normalizers of the subgroups $N_i$ in $\Symm(O_i)$, for $i=1,2,\dots,k$.

Let $N$ be any one of the subgroups $N_i$ and let $t=|O_i|$; thus $t$ is one of the parts of $T$. We write $\PR(t)=[p_1,p_2,\dots, p_\ell]$ and $\PP(t)=[q_1,q_2,\dots, q_\ell]$; then Proposition~\ref{p: distinct partition} tells us that
\[
N=P_{p_{1}, q_{1}}\dot{\times} P_{p_{2},q_{2}}\dot{\times} \dotsb \dot{\times} P_{p_{\ell}, q_{\ell}},
\]
for Sylow $p_{j}$-subgroups $P_{p_{j},q_{j}}$ of $S_{q_{j}}$, $j=1,2,\dots,\ell$. Now, if $h$ belongs to the normalizer of $N$ (in $\Symm(O_i)$), then, for each $j=1,2,\dots,\ell$, the permutation $h$ must preserve the unique system of imprimitivity of $N$ comprising $t/q_j$ sets of size $q_j$. Consequently, the normalizer of $N$ is the direct product of the normalizers $\NORM_{S_{q_{j}}}(P_{p_{j},q_{j}})$, for $j=1,2,\dots,\ell$. The normalizer $\NORM_{S_{q_{j}}}(P_{p_{j},q_{j}})$ is known to have order $(p_j-1)\smallstrut^{a_j}p_i^{e_j}$, where $q_j=p_j^{a_j}$ and $e_i=(p_i^{a_i}-1)/(p_i-1)$. Hence
 \[
|N(T)|=  \frac{|S_n|}{\lvert\NORM_{S_n}(N_g)\rvert} =
\frac{n!}{\prod\limits_{t\in T}\prod\limits_{i=1}^\ell (p_i-1)\smallstrut^{a_i}p_i^{e_i}},
\]
as required. 
\end{proof}

\section{Nilpotent covers of alternating groups}\label{section nilpotent  covers}

In this section we show that nilpotent covers of alternating groups do not share the properties of nilpotent covers of symmetric groups that we have established. We denote the alternating group on $n$ letters by~$A_n$. The following theorem contrasts with Corollaries~\ref{cor1} and~\ref{cor2}.

\begin{thm}\label{t: an}\leavevmode
\begin{enumerate}
 \item\label{ani}  For $n=9,12,14$ or $n\geq 16$, no minimal cover of $A_n$ by maximal nilpotent subgroups is~normal.
 \item\label{anii} For $n=9, 20,22,24,26,28,30$ or $n\geq 32$, we have $\sigma_N(A_n)< \Sigma_N(A_n)$.
\end{enumerate}
\end{thm}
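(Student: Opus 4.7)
I plan to prove both parts by extending the $S_n$ framework of Section~\ref{s: section2} to $A_n$ and then performing case analysis for the sporadic values of $n$.

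Step 1: Transfer the $S_n$ theory. If $g\in A_n$ has distinct-partition cycle type, then Proposition~\ref{p: distinct partition} yields the unique $N_g\leq S_n$, and a routine argument shows that $N_g\cap A_n$ is the unique maximal nilpotent subgroup of $A_n$ containing $g$: any nilpotent $M\leq A_n$ with $g\in M$ is nilpotent in $S_n$, hence $M\leq N_g$ and so $M\leq N_g\cap A_n$. Denote by $\mathcal{M}'$ the collection of such subgroups; every nilpotent cover of $A_n$ must include $\mathcal{M}'$.

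Step 2: Identify residual cycle types. Unlike in $S_n$, $\mathcal{M}'$ need not cover $A_n$: there exist cycle types $R$ of even permutations for which the amalgamation algorithm of Proposition~\ref{p: cover} only produces distinct partitions $T$ whose representatives lie outside $A_n$ (for instance, for $n=9$, the cycle types $\{1^2,7\}$, $\{2^2,5\}$, $\{1,4^2\}$ are residual, amalgamating to the odd distinct partitions $\{2,7\}$, $\{4,5\}$, $\{1,8\}$ respectively). For each residual cycle type, determine the maximal nilpotent subgroups of $A_n$ containing its elements; these arise as extensions of $N_g\cap A_n$ (for $g\notin A_n$) to a maximal nilpotent subgroup of $A_n$, and their structure must be analysed individually.

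Step 3: Prove part~(i). For each $n\in\{9,12,14\}\cup\{n:n\geq 16\}$, exhibit a residual cycle type whose residuals can be covered by a non-normal selection of maximal nilpotent subgroups of $A_n$ strictly smaller than any normal selection. The saving comes from exploiting splittings of $S_n$-conjugacy classes of maximal nilpotent subgroups in $A_n$, which allow one to discard an entire $A_n$-conjugacy class of covering subgroups when the residual elements already lie in the paired class. The exceptional values $n\in\{10,11,13,15\}$ are precisely those where every residual cycle type forces a uniquely determined (hence normal) collection of covering subgroups; for the $n$ in the theorem's list, a counting argument based on the parities of distinct partitions of $n$ rules out this rigidity.

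Step 4: Prove part~(ii). For the values of $n$ listed in~(ii), combine the analysis of Steps 1--3 with the observation that a non-nilpotent subset $X$ of $A_n$ contains at most one element from each maximal nilpotent subgroup it meets. When two maximal nilpotent subgroups of $A_n$ share an element of distinct-partition cycle type (which cannot be replaced by distinct witnesses in $X$), the inequality $\sigma_N(A_n)<\Sigma_N(A_n)$ is forced. One verifies this by explicit exhibition for the sporadic values $n\in\{9,20,22,24,26,28,30\}$ and by a uniform structural argument for $n\geq 32$, in both cases using residual cycle types for which this sharing phenomenon occurs.

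The main obstacle is Step~3: the careful bookkeeping of maximal nilpotent subgroups of $A_n$ arising from residual cycle types, together with the splittings of $S_n$-conjugacy classes in $A_n$. The asymptotic thresholds ($n\geq 16$ in~(i) and $n\geq 32$ in~(ii)) should follow from uniform arguments once $n$ admits enough distinct partitions of varied parity types to guarantee the requisite flexibility, while the finite list of sporadic values requires direct verification --- most feasibly by explicit enumeration of distinct partitions, parities, and induced maximal nilpotent subgroup structures for each listed $n$.
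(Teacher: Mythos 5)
There is a genuine gap in both halves of your plan. For part~(i), the mechanism you propose --- ``exploiting splittings of $S_n$-conjugacy classes of maximal nilpotent subgroups in $A_n$'' --- cannot deliver the result. The obstruction to normality lives in the conjugacy class $\mathcal{C}$ of elements of cycle type $4^21^1$ in $A_9$: the $S_9$-centralizer of such an element is a $2$-group, so every maximal nilpotent subgroup containing it is a Sylow $2$-subgroup of $A_9$, and Sylow subgroups form a \emph{single} conjugacy class, so no splitting phenomenon is available. What actually forces non-normality is redundancy within that single class: every element of a Sylow $2$-subgroup of $A_9$ lies in at least three of them, so the union of all but two Sylow $2$-subgroups already equals the union of all of them, whence a minimal cover contains some but not all members of the class. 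To propagate this to general $n$ you must pin a nilpotent subgroup containing an element of type $4^21^1(n-9)^1$ (or $4^21^13^1(n-12)^1$) inside $\Symm(O_1\cup O_2)\times\cdots$; this needs a strengthening of Lemma~\ref{l: intransitive dp} that tolerates one repeated orbit length (Lemma~\ref{l: intransitive dp 2} in the paper), which your Step~1 transfer does not provide.

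For part~(ii), your Step~4 asserts that when two maximal nilpotent subgroups share an element of distinct-partition type the inequality $\sigma_N<\Sigma_N$ ``is forced.'' This is false as a general principle: sharing elements is compatible with $\sigma_N=\Sigma_N$ (the pigeonhole only gives $\leq$), and indeed the paper notes that equality holds for $n=3,\dots,8$ even though nilpotent subgroups of those groups certainly overlap. Establishing strict inequality for $n=9$ requires comparing two hard invariants of the class of $4^2$ elements in $A_8$: an upper bound of $45$ on the independence number of the associated ``nilpotent-generation'' graph via the clique--coclique bound for vertex-transitive graphs, and a separate argument (computer-assisted in the paper) that no cover of that class by $45$ Sylow $2$-subgroups exists, because a size-$45$ cover would have to be disjoint on the class and a greedy construction shows this is impossible. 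Your proposal contains no substitute for either computation, and without them the strict inequality does not follow.
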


The picture is mixed for the remaining values of $n$; for example, $\sigma_N(A_n)=\Sigma_N(A_n)$ when $n=3,4,\dots,8$, and there is a unique minimal cover of $A_n$ by maximal nilpotent subgroups for some values of $n<32$ and not others.

To prove Theorem~\ref{t: an} we need the following stronger version of Lemma~\ref{l: intransitive dp}.

\begin{lem}\label{l: intransitive dp 2}
Let $O_1,O_2,\dots,O_k$	be the orbits in $\{1,2,\dots,n\}$ of an element $g$ of $S_n$, and let $t_i=|O_i|$, for $i=1,2,\dots, k$. Suppose that $t_1=t_2$ and $t_2,t_3,\dots,t_n$ are distinct with none of them equal to $2t_1$. Then any nilpotent subgroup $N$ of $S_n$ containing $g$ satisfies
\[
N \leq \Symm(O_1\cup O_2)\times \Symm(O_3) \times \dots \times \Symm(O_k).
\]
\end{lem}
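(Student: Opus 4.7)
The plan is to follow the proof of Lemma~\ref{l: intransitive dp} essentially verbatim, with a modified endgame that accommodates the repeated pair $t_1 = t_2$. As before, I would let $M = \Symm(O_1 \cup O_2) \times \Symm(O_3) \times \dots \times \Symm(O_k)$ and reduce to showing that each Sylow $p$-subgroup $P$ of $N$ is contained in $M$. Fixing $h \in P$, the first half of the original argument (invoking that $g_1 = g^{|m|_p}$ has order coprime to $p$ and therefore commutes with $h$) transfers unchanged to yield the implication: if $x \in O_i$ and $x^h \in O_j$, then $|t_i|_{p'} = |t_j|_{p'}$.

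I would then partition $\{1,\dots,k\}$ into equivalence classes under $|t_i|_{p'} = |t_j|_{p'}$. For any class $C'$ not containing $1$, the orbit sizes are drawn from $\{t_3,\dots,t_k\}$ and hence are distinct, so the remainder of the argument in Lemma~\ref{l: intransitive dp} applies verbatim and forces $h$ to fix each $O_i$ with $i \in C'$. The novelty lies entirely in the class $C$ containing both $1$ and $2$. When $C = \{1,2\}$ there is nothing further to prove; otherwise I restrict attention to $U = \bigcup_{i \in C} O_i$, write $t_i = c \cdot p^{b_i}$ for $i \in C$ (so $c$ is the common $p'$-part), and observe that $b_1 = b_2$ while $b_1$ together with $\{b_j : j \in C \setminus \{1,2\}\}$ are pairwise distinct.

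Following the original proof I set $g_2 = g^c$ on $U$, form the homomorphism $\theta \colon \CENT_{\Symm(U)}(g_1) \to S_{|U|/c}$ that records the action on $g_1$-orbits, and put $P' = \theta(P)$. Then $\theta(g_2)$ is a product of disjoint cycles of lengths $p^{b_i}$ for $i \in C$, including two cycles of length $p^{b_1}$. The task is to identify the base-$p$ expansion of $|U|/c = 2p^{b_1} + \sum_{j \in C \setminus \{1,2\}} p^{b_j}$. For odd $p$ the coefficient $2$ is smaller than $p$, so this expression is already the base-$p$ expansion. The Sylow orbit fact quoted at the start of Section~\ref{s: section2} then says the Sylow $p$-subgroup of $S_{|U|/c}$ has two orbits of size $p^{b_1}$ and one orbit of each size $p^{b_j}$; sandwiching $P'$ between $\langle\theta(g_2)\rangle$ and this Sylow forces the $P'$-orbits to equal $\theta(g_2)$'s orbits, so $\theta(h)$ preserves each $\theta(O_i)$ setwise, which is stronger than required.

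The main obstacle is the prime $p = 2$, where $2p^{b_1} = p^{b_1+1}$ produces a carry that could a priori amalgamate $\theta(O_1) \cup \theta(O_2)$ with an orbit of size $2t_1$. This is exactly the scenario excluded by the extra hypothesis: $t_i \neq 2t_1$ for every $i$ translates to $b_j \neq b_1 + 1$ for every $j \in C \setminus \{1,2\}$, so the base-$2$ expansion of $|U|/c$ has digit $1$ at position $b_1 + 1$ and at each position $b_j$, all distinct. The Sylow $2$-subgroup then has a single orbit of size $2^{b_1+1}$, which by cycle-length counting must equal $\theta(O_1) \cup \theta(O_2)$, together with a single orbit of size $2^{b_j}$ equal to $\theta(O_j)$ for each remaining $j$. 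Since the $P'$-orbits are unions of $\theta(g_2)$-orbits contained in Sylow orbits, $\theta(h) \in P'$ preserves $\theta(O_1) \cup \theta(O_2)$ setwise and preserves each $\theta(O_j)$, so $h$ preserves $O_1 \cup O_2$ and fixes each $O_j$ with $j \in C \setminus \{1,2\}$, completing the verification that $h \in M$.
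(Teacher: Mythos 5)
Your proposal is correct and follows essentially the same route as the paper: the paper's proof likewise reuses the argument of Lemma~\ref{l: intransitive dp} up to the penultimate paragraph and then notes that $n/d=2t_1/d+t_3/d+\dots+t_\ell/d$ with the $t_j/d$ distinct and none equal to $t_1/d$ or $2t_1/d$, which is exactly the Sylow-orbit counting you carry out. Your explicit separation of the odd-$p$ case from the $p=2$ case (where the carry $2\cdot 2^{b_1}=2^{b_1+1}$ is what makes the hypothesis $t_j\neq 2t_1$ necessary) is just a more detailed rendering of the step the paper states tersely.
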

\begin{proof}
Let $P$ be any Sylow $p$-subgroup of $N$ and let $h\in P$. It suffices to prove that $(O_1\cup O_2)^h=O_1\cup O_2$ and $O_i^h=O_i$, for $i=3,4,\dots, k$.  We denote by $m$ the order of $g$ and let $d=|m|_{p'}$.  The argument now follows that of the proof of Lemma~\ref{l: intransitive dp} from the second paragraph to the end of the penultimate paragraph, so we proceed from there.
 
Observe that each of the integers $t_1/d,t_2/d,\dots,t_\ell/d$ is a power of $p$. Further, with the  exception of $t_1/d$ and $t_2/d$, these integers are distinct. It follows that the $p$-ary decomposition of $n/d$ can be written as 
\[
n/d=2 t_1/d + t_3/d +t_4/d+\dots + t_\ell/d.
\]
Since $t_3/d,t_4/d,\dots,t_\ell/d$ are distinct $p$th powers, none equal to $t_1/d$ or $2t_1/d$, we can argue as in Lemma~\ref{l: intransitive dp}, using the observations about orbits of Sylow $p$-subgroups stated before that lemma, to see  that $O_i^h=O_i$, for $i=3,4,\dots,k$. Clearly then $(O_1\cup O_2)^h=O_1\cup O_2$, as required.
\end{proof}

We recall the partition notation introduced in Proposition~\ref{p: cover}, in which $h\in S_n$ is said to have cycle type $\lambda_1^{a_1}\lambda_2^{a_2}\dotsb \lambda_k^{a_k}$ if $h$ is  a product of $a_i$ disjoint $\lambda_i$-cycles, for $i=1,2,\dots, k$, and $a_1\lambda_1+a_2\lambda_2+\dots +a_k\lambda_k=n$.

\begin{proof}[Proof of Theorem~\ref{t: an}]
First we prove assertion~\ref{ani}. To begin we will need two easy facts about elements in $A_9$. 
\begin{enumerate}
 \item[(a)] If $g$ is an element of cycle type $4^21^1$ in $A_9$, then $\CENT_{S_9}(g)$ is a $2$-group; consequently, a maximal nilpotent subgroup of $S_9$ containing $g$ is a Sylow $2$-subgroup of $S_9$ (and a maximal nilpotent subgroup of $A_9$ containing $g$ is a Sylow $2$-subgroup of $A_9$). 
 \item[(b)] Every element in a Sylow $2$-subgroup of $A_9$ lies in at least three Sylow $2$-subgroups of $A_9$.
\end{enumerate}

It should be clear that these facts yields assertion~\ref{ani} for $n=9$. Indeed, Fact (a) implies that a cover of $A_9$ by maximal nilpotent subgroups must contain some Sylow $2$-subgroups of $A_9$, but Fact (b) implies that the union of all but two of the Sylow $2$-subgroups of $A_9$ is equal to the union of all Sylow $2$-subgroups of $A_9$. Thus a minimal cover by maximal nilpotent subgroups must contain some but not all Sylow $2$-subgroups of $A_9$.

Next we use these facts about $A_9$ to deal with larger alternating groups. 

Suppose that $n\geq 17$ and $n$ is odd. Let $g$ be an element of $A_n$ with cycle type $4^21^13^1(n-12)^1$. Lemma~\ref{l: intransitive dp 2} implies that a maximal nilpotent subgroup $N$ of $A_n$ that contains $g$ must be a subgroup of $S_{9}\times S_{n-9}$. Indeed, Fact (a) implies that $N\leq P\times S_{n-9}$, where $P$ is a Sylow $2$-subgroup of $S_{9}$. But now Fact (b) implies that we can select two conjugates of $N$ in $A_n$ such that the the union of all of the conjugates of $N$ but these two is equal to the union of all of the conjugates of $N$. Thus if $\mathcal{M}$ is a minimal cover by maximal nilpotent subgroups, and $N$ is an element of $\mathcal{M}$ that contains $g$, then $\mathcal{M}$ must contain some but not all conjugates of $N$ in $A_n$.

A similar argument can be applied when $n\geq 12$ and $n$ is even. In this case we choose an element $g$ of $A_n$ with cycle type $4^21^1(n-9)^1$. This completes the proof of assertion~\ref{ani}.

For assertion~\ref{anii} we begin by proving the result for $n=9$. Consider first elements of the following cycle types:
\[
 9^1, \quad 6^12^11^1, \quad 5^13^11^1, \quad 4^13^12^1.
\]
Proposition~\ref{p: distinct partition} implies that each such element lies in a unique maximal nilpotent subgroup of $S_9$, so each one lies in a unique maximal nilpotent subgroup of $A_9$. The same proposition tells us the structure of the associated maximal nilpotent subgroups, and we can calculate directly that, between them, these subgroups contain all elements of $A_9$ except those of cycle types
\[
 7^11^2, \quad 5^12^2, \quad 4^21^1.
\]
It is easy to check that any element of either of the first two types lies in a unique maximal nilpotent subgroup of $A_9$. Thus we conclude that there is a unique cover by maximal nilpotent subgroups of $A_9\setminus \mathcal{C}$, where $\mathcal{C}$ is the conjugacy class of elements of type $4^21^1$. What is more, each nilpotent group $N$ in this cover contains an element $g_N$ with the property that $N$ is the only maximal nilpotent subgroup of $A_9$ that contains $g_N$.

Let $\mathcal{M}$ be a minimal cover of $\mathcal{C}$ by Sylow $2$-subgroups of $A_9$. For $\alpha\in\{1,2,\dots, 9\}$, let $\mathcal{C}_\alpha$ be the set of elements in $\mathcal{C}$ that fix the element $\alpha$. Observe that a Sylow 2-subgroup of $A_9$ that contains an element of $\mathcal{C}_\alpha$ must itself fix $\alpha$. It is therefore clear that there is a unique minimal set  $\mathcal{M}_\alpha\subset\mathcal{M}$ that covers $\mathcal{C}_\alpha$, and this set $\mathcal{M}_\alpha$ is simply the Sylow $2$-subgroups of $\mathcal{M}$ that fix the element $\alpha$.  

On the other hand, let $X$ be a non-nilpotent subset of $\mathcal{C}$. Note that if $g,h\in \mathcal{C}$ have different fixed points then $\langle g, h\rangle$ is not nilpotent (to see this, simply consider orbit sizes). This implies that if $X_\alpha$ is the set of elements in $X$ that fix a particular element $\alpha\in\{1,2,\dots,9\}$, then $X_\alpha$ is a maximal non-nilpotent subset of $\mathcal{C}_\alpha$.

Now, the elements of $\mathcal{C}_\alpha$ form a conjugacy class in the subgroup of $A_9$ that is isomorphic to $A_8$ and stabilizes $\alpha$; likewise the elements of $\mathcal{M}_\alpha$ are Sylow $2$-subgroups of this same copy of $A_8$. Hence to show that $\sigma_N(A_9)<\Sigma_N(A_9)$ it is sufficient to prove the following claim.

\begin{claim}
	Let $\mathcal{C}_1$ be the conjugacy class of elements of cycle type $4^2$ in $A_8$. The size of a maximal non-nilpotent subset of $\mathcal{C}_1$ is strictly less than the number of Sylow $2$-subgroups of $A_8$ required to cover $\mathcal{C}_1$.
\end{claim}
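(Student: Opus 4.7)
The plan is to analyze the incidence between the $1260$ elements of $\mathcal{C}_1$ (cycle type $4^2$ in $A_8$) and the $315$ Sylow $2$-subgroups of $A_8$, and then show that the fractional LP bound of $45$ cannot be attained as an integer on both sides of the Claim's inequality simultaneously.

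First I would establish that each Sylow $2$-subgroup $P$ of $A_8$ contains exactly $28$ elements of $\mathcal{C}_1$ and each $g \in \mathcal{C}_1$ lies in exactly $7$ Sylow $2$-subgroups. This follows by direct enumeration in the iterated wreath product $C_2 \wr C_2 \wr C_2$ (the Sylow $2$-subgroup of $S_8$, parameterised by a nested block system $\{\{A_1 \sqcup A_2, B_1 \sqcup B_2\}, \{A_1, A_2\}, \{B_1, B_2\}\}$ of $\{1, \ldots, 8\}$). The $28$ elements of $P \cap \mathcal{C}_1$ split as $4$ ``canonical'' elements (for which $P$ is the unique maximal nilpotent subgroup of $S_8$ containing them, by Lemma~\ref{l: intransitive dp}) and $24$ ``swap'' elements. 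Double counting gives that a maximal non-nilpotent subset of $\mathcal{C}_1$ has at most $45$ elements, and that at least $45$ Sylow $2$-subgroups are required to cover $\mathcal{C}_1$; since $45 \cdot 28 = 1260$, any cover by exactly $45$ Sylow $2$-subgroups would necessarily be an exact partition of $\mathcal{C}_1$.

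The main step is to rule out such a partition. I would introduce the squaring map $\phi \colon g \mapsto g^2$ from $\mathcal{C}_1$ onto the class $\mathcal{C}_2$ of cycle type $2^4$ (size $105$), with each fibre of size $12$. For each $\iota \in \mathcal{C}_2$, the $12$ preimages of $\iota$ are precisely the $4$ canonical elements of each of the $3$ Sylow $2$-subgroups $P$ having $Z(P) = \langle\iota\rangle$, and all $12$ preimages lie in all $3$ of these Sylow $2$-subgroups (as $4$ canonical plus $8$ swap per Sylow $2$). Hence any two of these three Sylow $2$-subgroups share at least $12$ elements of $\mathcal{C}_1$, so a putative partition uses at most one Sylow $2$-subgroup per $\iota$; writing $k_\iota \in \{0,1\}$ with $\sum_\iota k_\iota = 45$, the $60$ involutions with $k_\iota = 0$ contribute $60 \cdot 12 = 720$ preimages that must be absorbed as swap elements of the $45$ chosen Sylow $2$-subgroups. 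Each chosen Sylow $2$-subgroup provides exactly $16$ such swap slots (namely the preimages in $P$ of the $4$ specific non-central involutions of $\mathcal{C}_2 \cap P$ that arise as squares of swap elements of $P$), yielding the numerically tight identity $45 \cdot 16 = 720$. The main obstacle is to show that this tight system of compatibility constraints cannot actually be satisfied: the $4$ non-central involutions with preimages in each $P$ have a rigid structure (they are those whose pairs ``cross'' both sub-partitions of $P$), and tracing the resulting constraint across the $A_8$-orbit on Sylow $2$-subgroups should yield a contradiction. If a clean conceptual obstruction proves elusive, the Claim is well within scope of computer-algebra verification, enumerating the $315$ Sylow $2$-subgroups and checking that no $45$ of them have pairwise disjoint $\mathcal{C}_1$-intersections.
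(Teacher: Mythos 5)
Your setup is broadly sound and most of your numerology checks out: each element of $\mathcal{C}_1$ does lie in exactly $7$ of the $315$ Sylow $2$-subgroups, each Sylow $2$-subgroup meets $\mathcal{C}_1$ in $28$ elements, the two double counts give (size of a non-nilpotent subset of $\mathcal{C}_1$) $\le 45 \le$ (cover number) with equality on the right forcing an exact partition, and the observation that all $12$ square roots of a fixed $2^4$-involution $\iota$ lie in each of the three Sylow $2$-subgroups whose centre is $\langle\iota\rangle$ is correct and is a genuine structural constraint. One internal inconsistency, though: your ``canonical'' elements, for which you assert that $P$ is the \emph{unique} maximal nilpotent subgroup of $S_8$ containing them, cannot exist --- you yourself note that every element of $\mathcal{C}_1$ lies in $7$ Sylow $2$-subgroups, and Lemma~\ref{l: intransitive dp} is inapplicable because $4+4$ is not a distinct partition. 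Your argument only really uses the split of $P\cap\mathcal{C}_1$ into the $12$ elements squaring to the central involution and the $16$ squaring elsewhere, so this mislabelling is repairable, but as stated it is false.

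The genuine gap is that your proof stops exactly where the Claim begins. Everything you actually establish is consistent with both quantities equalling $45$: the identity $45\cdot 16=720$ is a \emph{tight feasibility} count, not an obstruction, and ``tracing the resulting constraint across the $A_8$-orbit \ldots should yield a contradiction'' is a hope rather than an argument. Since the Claim is precisely the assertion that the tie at $45$ breaks, the step you defer is the entire content of the statement. The paper also resolves this by a finite computation, but it carries the computation out: working with the $630$ cyclic subgroups generated by $4^2$-elements as a vertex-transitive graph $\Gamma$ of valency $14$, it shows with {\tt GAP} that after one free choice $P_1$ and one essentially forced choice $P_2$ in a putative disjoint $45$-member cover, the Sylow $2$-subgroups still admissible meet only $600$ vertices of $\Gamma$, so at most $628<630$ vertices can ever be covered --- an explicit contradiction. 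To complete your version you must either perform the analogous finite verification (e.g.\ check that no $45$ Sylow $2$-subgroups of $A_8$ have pairwise disjoint intersections with $\mathcal{C}_1$) or exhibit the conceptual obstruction you allude to; as written, neither is done, so the strict inequality is not proved.
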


\begin{claimproof}
	Our proof proceeds with help from {\tt GAP} as follows.
	\begin{enumerate}
		\item Construct a graph $\Gamma$ as follows:
		\begin{itemize}
			\item The vertices of $\Gamma$ are the 630 cyclic subgroups of $A_8$ that are generated by an element of cycle type~$4^2$.
			\item Two vertices $X$ and $Y$ are joined by an edge in $\Gamma$ if $\langle X, Y\rangle$ is nilpotent.
		\end{itemize}
		\item One can check directly that $\Gamma$ is a vertex-transitive graph and that every vertex of $\Gamma$ has degree $14$.
		\item A Sylow $2$-subgroup of $A_8$ contains $14$ vertices of $\Gamma$. All of these vertices are connected to each other; thus the clique number (the maximal size of a complete subgraph) of $\Gamma$ is at least $14$.
		\item The clique--coclique bound for vertex-transitive graphs tells us that the coclique number of $\Gamma$ is at most $630/14=45$. Here the coclique number is the maximum size of an independent set in $\Gamma$; equivalently, it is the maximum size of a non-nilpotent subset of $\mathcal{C}_1$. Thus to prove the claim we must show that the number of Sylow $2$-subgroups of $A_9$ required to cover $\mathcal{C}_1$ is strictly greater than $45$.
		\item Since each Sylow $2$-subgroup of $A_8$ contains $14$ vertices of $\Gamma$, it is clear that such a cover will require at least $630/14=45$ Sylow $2$-subgroups. If a cover of size $45$ exists, then it must be \emph{disjoint}; that is, if $P_1$ and $P_2$ are two of the $45$ Sylow $2$-subgroups in such a cover, then $P_1\cap P_2 \cap \mathcal{C}_1=\varnothing$. Let us start to construct such a cover $\mathcal{N}$.
		\item Since the set of Sylow $2$-subgroups forms a single conjugacy class of subgroups in $A_8$, we can pick any of the $315$ Sylow $2$-subgroups $P_1$ as our first element of the cover. To choose our second element we must eliminate all Sylow $2$-subgroups $P_2$ for which $P_1\cap P_2 \cap \mathcal{C}_1\neq\varnothing$; {\tt GAP} tells us that we have 276 to choose from.
		\item We now use {\tt GAP} to establish the existence of an element $g_0\in\mathcal{C}_1\setminus P_1$ that lies in exactly $2$ of these remaining 276 Sylow $2$-subgroups. 
		\item For at least one choice of $g_0$, the following holds: let $P_2$ be either of the two Sylow $2$-subgroups that contain $g_0$ and satisfy $P_1\cap P_2 \cap \mathcal{C}_1$ is empty. Thus the cover $\mathcal{N}$ must contain such a subgroup $P_2$ as its second element. To choose our third element we must eliminate all Sylow $2$-subgroups $P_3$ for which $P_i\cap P_3 \cap \mathcal{C}_1\neq\varnothing$ for $i=1$ or $2$; {\tt GAP} tells us that, for either choice of $P_2$, we have 247 to choose from. Furthermore, for either choice, we use {\tt GAP} to establish that these 247 Sylow $2$-subgroups of $A_8$ contain precisely 600 vertices of $\Gamma$. However, $P_1\cup P_2$ contains 28 vertices of $\Gamma$, so we conclude that the elements of $\mathcal{N}$ can contain at most 628 vertices of $\Gamma$. Hence $\mathcal{N}$ does not cover $\Gamma$, so contrary to the assumption in part~(v), $\mathcal{N}$ must have order strictly greater than 45, as required. \hfill $\blacksquare$
	\end{enumerate}
\end{claimproof}

Let us now prove assertion~\ref{anii} of Theorem~\ref{t: an} for $n>9$. First, if $n\geq 20$ and $n$ is even, then Lemma~\ref{l: intransitive dp 2} implies that if $N$ is a nilpotent subgroup of $S_n$ containing an element of one of the types
\begin{center}
\begin{tabular}{m{3cm}m{3cm}m{3cm}m{3cm}}
 $9^1(n-9)^1$,  & $6^12^11^1(n-9)^1$, &$5^13^11^1(n-9)^1$,  &$4^13^12^1(n-9)^1$,\\
 $7^11^2(n-9)^1$, &$5^12^2(n-9)^1$,  &$4^21^1(n-9)^1$, &
\end{tabular}
\end{center}
then $N$ lies in a maximal subgroup of $S_n$ isomorphic to $S_9\times S_{n-9}$. Indeed, if $g$ is any such element for which the $(n-9)$-cycle is $(10,11,\dots, n)$, and $N$ is a nilpotent subgroup containing $g$, then $N$ is a subgroup of $M=A_9\times \langle (10,11,\dots, n)\rangle$. In particular, any cover of $A_n$ by maximal nilpotent subgroups must have a subset that is a cover of $M$ by maximal nilpotent subgroups of $M$. But elements of such a cover have the form $N_0 \times \langle (10,11,\dots, n)\rangle$, where $N_0$ ranges over a cover of $A_9$ by maximal nilpotent subgroups. We have seen that it is not possible to construct such a cover with the property that we can pick an element $g_0$ in each member $N_0$ of the cover such that the resulting set of elements $g_0$  is a non-nilpotent subset of $A_n$. In other words $\sigma_N(M)<\Sigma_N(M)$, and consequently $\sigma_N(A_n)<\Sigma_N(A_n)$.

A similar argument can be applied when $n\geq 33$ and $n$ is odd, but with the parts $11^1(n-20)^1$ in place of~$(n-9)^1$.
\end{proof}

Theorem~\ref{t: an} suggests that it is unlikely that there exists a closed formula for $\Sigma_N(A_n)$ or $\sigma_N(A_n)$ in the vein of Corollary~\ref{cor2}. The formula in that corollary is obtained by summing over certain partitions, each corresponding to a conjugacy class of maximal nilpotent subgroup, the members of which lie in a minimal cover. Theorem~\ref{t: an} implies that a minimal cover of $A_n$ by nilpotent subgroups is not a union of conjugacy classes of subgroups, hence this approach seems futile.

For instance, to calculate $\Sigma_N(A_{16})$ one would need to establish how many of the 638\,512\,875 Sylow $2$-subgroups of $A_{16}$ are required to cover the conjugacy class of cycle type $8^2$ elements. Similarly, referring to the proof above, to calculate $\sigma_N(A_9)$ one would need to establish the maximum size of a non-nilpotent subset of the conjugacy class of $4^2$ elements in $A_8$; the proof above gives an upper bound of $45$ while computer calculations have established a lower bound of $39$. The exact value requires more work.

\section{Nilpotent and abelian covers of other groups}\label{section other covers}

In this final section we contrast our work with known results on covers of almost simple groups by considering the following question.

\begin{question}\label{q four}
	Let $G$ be an almost simple group. Which of the following properties are satisfied by $G$?
	\begin{enumerate}
		\item There is a unique minimal cover of $G$ by maximal nilpotent subgroups.
		\item Each minimal cover of $G$ by maximal nilpotent subgroups is normal.
		\item There exists a minimal cover of $G$ by maximal nilpotent subgroups that is normal.
		\item $\Sigma_N(G)=\sigma_N(G)$. 
	\end{enumerate}
\end{question}

Evidently, if $G$ satisfies (i) then it satisfies (ii), and if $G$  satisfies (ii) then it satisfies (iii). We are unaware of further dependencies between the four statements.

We have seen that all four properties hold for $S_n$ (when $n\geq 5$) and all four properties fail for $A_n$ (when $n\geq 32$). Results in the literature \cite{Azizollah} (following earlier work of Azad \cite{azad}) confirm that the fourth property holds when $G$ is a finite group of Lie type of rank 1; in fact, the proofs given in \cite{Azizollah} confirm that if $G$ is ${\rm PGL}_2(q)$ or ${\rm PSL}_2(q)$, then all four properties hold.

One can frame analogous versions of Question~\ref{q four} with abelian covers or solvable covers (say) in place of nilpotent covers. Apparently there is no literature on solvable covers, but abelian covers of $S_n$ have been studied before. We summarise the findings here, since they contrast with the results for nilpotent covers of $S_n$. 

An \emph{abelian cover} of a group $G$ is a finite family of abelian subgroups of $G$ whose union is equal to $G$. We write $\Sigma_A(G)$ for the smallest possible size of an abelian cover of $G$. A \emph{non-commuting subset} of a group $G$ is a subset $X$ of $G$ such that any two distinct elements of $X$ generate a non-abelian group (or, equivalently, any two distinct elements of $X$ do not commute). We write $\sigma_A(G)$ for the size of the largest non-commuting subset of $G$.

 Just as for nilpotent covers, it is clear that $\sigma_A(G)\leq \Sigma_A(G)$. However, in contrast to Corollary~\ref{cor2}, Brown \cite{brown1, brown2} proved that $\sigma_A(S_n)< \Sigma_A(S_n)$ for $n\geq 15$. Furthermore, Barrantes {\it et al.} showed that, likewise, $\sigma_A(A_n)< \Sigma_A(A_n)$ for $n\geq 20$ \cite{BGR}. It would be of interest to determine whether the minimal covers of $S_n$ and $A_n$ by maximal abelian subgroups are unique or normal.

\subsection*{Acknowledgments}

The main result of this paper is based on work from the PhD thesis of the second author, supervised by the first and third authors. This supervisory arrangement has been made possible through a grant from the Mentoring African Research in Mathematics scheme sponsored by the LMS, IMU and AMMSI. All three authors thank these organisations for their invaluable support. 

We would also like to thank the anonymous referee for helpful suggestions and the editors of JGT for suggesting we extend our treatment to include the alternating groups.

\bibliographystyle{plain}
\bibliography{references-2}
\end{document}